\documentclass[preprint]{elsarticle}

\usepackage{nccmath}
\usepackage{amsmath}
\usepackage{amsthm}
\usepackage{amssymb}
\usepackage{pdfpages}
\usepackage{afterpage}
\usepackage{bm}

\theoremstyle{plain}
\newtheorem{thm}{Theorem}[section]
\newtheorem{lem}{Lemma}[section]
\newtheorem{prop}{Proposition}[section]

\newtheorem{con}{Conjecture}[section]

\theoremstyle{definition}
\newtheorem{rem}{Remark}[section]

\newcommand{\BC}{\mathbb{C}}

\newcommand{\VEC}{\mathrm{vec}}
 
\newcommand{\Tr}{\mathrm{tr}} 

\newcommand{\Diag}{\mathrm{diag}}

\begin{document}

\begin{frontmatter}

\title{A Generalization of the B\"{o}ttcher-Wenzel inequality for three rectangular matrices}

\author{Motoyuki NOBORI}
\address{Graduate School of Science and Engineering, Ehime University, 2-5 Bunkyo-cho, Matsuyama 790-8577, Japan}
\ead{m819002c@mails.cc.ehime-u.ac.jp}

\begin{abstract}
Let $m,n$ be positive integers. For all $m\times n$ complex matrices $A, C$ and an $n\times m$ matrix $B$, we define a generalized commutator as $ABC-CBA$. We estimate the Frobenius norm of it, and finally get the inequality, which is a generalization of the B\"{o}ttcher-Wenzel inequality. If $n=1$ or $m=1$, then the Frobenius norm of $ABC-CBA$ can be estimated with a tighter upper bound.
\end{abstract}

\begin{keyword}
Generalized commutator, B\"{o}ttcher-Wenzel inequality, Ky Fan (2,2)-norm, Kronecker product (15A45, 15A60, 15A18)
\end{keyword}

\end{frontmatter}

\section{Introduction}
Let $n,m$ be positive integers. The B\"{o}ttcher-Wenzel inequality is an estimate on the Frobenius norm of the commutator $AB-BA$, where both $A$ and $B$ are either real or complex $n\times n$ matrices. It asserts that 
\begin{align}\label{ineq:bwineq}
\|AB-BA\|_{F}^{2}\le 2\|A\|^{2}_{F}\|B\|^{2}_{F},
\end{align}
where $\|A\|_{F}=\sqrt{\Tr(AA^{*})}$ is the Frobenius norm, and $A^{*}=\overline{A}^{T}$ is the conjugate transpose.
Regarding (\ref{ineq:bwineq}), several proofs have been discovered so far (cf. \cite{TheFro, OnSome, VarBou} for the real case, and \cite{ProOf,NorSca} for the complex case). It is also known that the Frobenius norm of the commutator can be bounded above by an even smaller quantity than that given in (\ref{ineq:bwineq}). For example,
\begin{align}\label{ineq:sharpenbwineq1}
\|AB-BA\|_{F}^{2}\le 2\|A\|^{2}_{(2),2}\|B\|^{2}_{F}
\end{align}
holds (cf. \cite{OnSome, VarBou}), where $\|A\|_{(2),2}=\sqrt{\sigma^{2}_{1}(A)+\sigma^{2}_{2}(A)}$ is the Ky Fan (2,2)-norm and $\sigma_{i}(A)$ is the $i$-th largest singular value of $A$ for $i\in \{1 ,\cdots,n\}.$ Also,
\begin{align}\label{ineq:sharpenbwineq2}
\|AB-BA\|_{F}^{2}\le \|A\otimes B-B\otimes A\|^{2}_{F} =2(\|A\|^{2}_{F}\|B\|_{F}^{2}-|(A,B)|^{2})
\end{align}
holds \cite{TheFro}, where $(A,B)=\Tr(AB^{*})$ denotes the inner product of $A$ and $B$. In recent years, there have been attempts to generalize the B\"{o}ttcher-Wenzel inequality by extending the notions of commutators and norms(cf. \cite{GBWinnerpro, GBWqdefor}). Although different from the original B\"{o}ttcher-Wenzel inequality, estimates for the Frobenius norm of a matrix consisting of the product and difference of three real square matrices has been also analyzed \cite{ANor}.\par
In this paper, following \cite{ANor}, we define the generalized commutator as $ABC-CBA$ for $m\times n$ matrices $A,C,$ and an $n\times m$ matrix $B$, and we estimate its Frobenius norm. We finally obtain the following result.
\begin{thm}\label{thm:gbwineq1}
For all $m\times n$ complex matrices $A,C,$ and $n\times m$ complex matrix $B$, 
\begin{align}\label{ineq:gbwineq1}
\|ABC-CBA\|_{F}^{2} \le 2\|C\|_{2}^{2}\|A\|^{2}_{(2),2}\|B\|_{F}^{2}
\end{align}
where $\|A\|_{2}=\sigma_{1}(A)$ is the spectral norm of $A$. Furthermore, if $m=1$ or $n=1$, then
\begin{align}\label{ineq:sharpengbwineq1}
\|ABC-CBA\|_{F}^{2} \le \|C\|_{2}^{2}\|A\|^{2}_{(2),2}\|B\|_{F}^{2}
\end{align}
holds.
\end{thm}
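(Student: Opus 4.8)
The plan is to reduce the theorem to a single operator-norm estimate and then exploit a rank reduction in $A$. Since $A$, $B$, $C$ have sizes $m\times n$, $n\times m$, $m\times n$, the generalized commutator $ABC-CBA$ is $m\times n$, and for fixed $A,C$ the map $\Phi\colon B\mapsto ABC-CBA$ is linear on the space of $n\times m$ matrices. Hence $\|ABC-CBA\|_F\le\|\Phi\|_{\mathrm{op}}\|B\|_F$, where $\|\Phi\|_{\mathrm{op}}=\sup_{\|X\|_F=1}\|AXC-CXA\|_F$ is the Frobenius-to-Frobenius operator norm. Using the vectorization identity $\VEC(AXC)=(C^T\otimes A)\VEC(X)$, this operator norm equals the spectral norm $\|C^T\otimes A-A^T\otimes C\|_2$ of the associated Kronecker matrix, so the whole theorem is equivalent to the bound $\|\Phi\|_{\mathrm{op}}^2\le 2\|C\|_2^2\|A\|_{(2),2}^2$ (and $\le\|C\|_2^2\|A\|_{(2),2}^2$ when $m=1$ or $n=1$). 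I would carry $C$ through its spectral norm using the operator inequalities $CC^*\preceq\|C\|_2^2 I_m$ and $C^*C\preceq\|C\|_2^2 I_n$.

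The next step is to reduce to the case $\Rank A\le 2$. For fixed $X$ and $C$ the matrix $AXC-CXA$ is linear in $A$, so $A\mapsto\|\Phi\|_{\mathrm{op}}$ is a supremum of norms of linear functions of $A$ and is therefore convex. Restricting $A$ to the compact convex ball $\{\|A\|_{(2),2}\le 1\}$, this convex function attains its supremum at an extreme point; since $\|A\|_{(2),2}$ depends only on $\sigma_1(A)$ and $\sigma_2(A)$, any matrix of rank $\ge 3$ is a midpoint of two matrices with the same $(2,2)$-norm (perturb a singular value of index $\ge 3$), so every extreme point has rank $\le 2$. It therefore suffices to prove the bound when $\Rank A\le 2$, in which case $\|A\|_{(2),2}^2=\sigma_1^2(A)+\sigma_2^2(A)=\|A\|_F^2$ exactly. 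Writing the SVD $A=\sigma_1 p_1q_1^*+\sigma_2 p_2q_2^*$ with $\{p_i\}\subset\BC^m$ and $\{q_i\}\subset\BC^n$ orthonormal gives $AXC=\sum_i\sigma_i p_i(C^*X^*q_i)^*$ and $CXA=\sum_i\sigma_i(CXp_i)q_i^*$, each of rank $\le2$.

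The main obstacle is the core estimate for rank-two $A$: showing $\|AXC-CXA\|_F^2\le 2\|C\|_2^2(\sigma_1^2+\sigma_2^2)\|X\|_F^2$. A naive bound $\|AXC-CXA\|_F\le\|AXC\|_F+\|CXA\|_F$ only yields the constant $4$, because it double-counts the Frobenius mass of $X$: both $AXC$ (which sees $X$ through the rows $q_i^*X$) and $CXA$ (which sees $X$ through the columns $Xp_i$) draw on the same budget $\|X\|_F^2$. Recovering the sharp constant $2$ requires expanding $\|AXC-CXA\|_F^2=\|AXC\|_F^2+\|CXA\|_F^2-2\,\mathrm{Re}\,\langle AXC,CXA\rangle$ and genuinely using the negative cross term: the inner product $\langle AXC,CXA\rangle$ couples the row data $q_i^*XC$ and the column data $CXp_j$ through the orthonormality of $\{p_i\}$ and $\{q_i\}$, and the cancellation it produces is exactly what removes the extra factor of $2$. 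I expect this bookkeeping---estimating the diagonal terms by $\|C\|_2^2$ while controlling the cross term and packaging everything against the single budget $\|X\|_F^2$---to be the technical heart of the argument.

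Finally, the sharpening for $m=1$ or $n=1$ should fall out of the same computation. In these cases $A$ and $C$ are row or column vectors, so $A$ automatically has rank $\le 1$ and the second singular direction is absent; the two ``channels'' $q_i^*X$ and $Xp_i$ then collapse onto a single degree of freedom, so the Frobenius mass of $X$ is no longer split between two independent contributions. Tracking this degeneracy in the cross-term analysis replaces the shared budget that produced the factor $2$ by a single budget, improving the constant to $1$ and giving $\|ABC-CBA\|_F^2\le\|C\|_2^2\|A\|_{(2),2}^2\|B\|_F^2$.
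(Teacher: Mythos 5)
Your proposal has a fatal gap at the rank-reduction step, and the part you defer (``the technical heart'') is exactly where all the difficulty of the theorem lives. The claim that every extreme point of the ball $\{\|A\|_{(2),2}\le 1\}$ has rank at most $2$ is false. Your perturbation argument breaks precisely when $\sigma_{2}(A)=\sigma_{3}(A)$: replacing $\sigma_{3}$ by $\sigma_{3}+\epsilon$ then makes it one of the top two singular values, so $A\pm\epsilon\, u_{3}v_{3}^{*}$ does not stay in the ball. Concretely, for $m=n=3$ the matrix $A_{0}=\frac{1}{\sqrt{2}}I_{3}$ satisfies $\|A_{0}\|_{(2),2}=1$ but cannot be written as a convex combination of rank-$\le 2$ matrices $B_{i}$ with $\|B_{i}\|_{(2),2}\le 1$: for rank-$\le 2$ matrices the $(2,2)$-norm equals the Frobenius norm, so $|\Tr(B_{i})|\le \sigma_{1}(B_{i})+\sigma_{2}(B_{i})\le\sqrt{2}\,\|B_{i}\|_{F}\le\sqrt{2}$, whereas $\Tr(A_{0})=3/\sqrt{2}>\sqrt{2}$. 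By Krein--Milman, the unit ball therefore has extreme points of rank $\ge 3$, and your reduction to the case $\Rank A\le 2$ (where $\|A\|_{(2),2}=\|A\|_{F}$) is unjustified. Everything downstream of that reduction collapses with it.

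Even setting that aside, the core estimate you would need --- $\|AXC-CXA\|_{F}^{2}\le 2\|C\|_{2}^{2}\|A\|_{F}^{2}\|X\|_{F}^{2}$ for low-rank $A$, recovering the constant $2$ from the cross term $-2\,\mathrm{Re}\,(AXC,CXA)$ --- is asserted as expected bookkeeping but never carried out, and it is not routine: the naive triangle-inequality bound gives $4$, and no direct expansion of the cross term in the singular vectors of $A$ is known to close the gap. The paper's proof supplies exactly the machinery that replaces this: writing $\|ABC-CBA\|_{F}^{2}=\|K_{A,C}\VEC(B)\|_{2}^{2}$ with $K_{A,C}=C^{T}\otimes A-A^{T}\otimes C$, it shows (i) every positive eigenvalue of $K_{A,C}^{*}K_{A,C}$ has geometric multiplicity $\ge 2$, so $\lambda_{1}=\lambda_{2}$; (ii) the operator inequality $K_{A,C}^{*}K_{A,C}\le 2(\overline{C}C^{T}\otimes A^{*}A+\overline{A}A^{T}\otimes C^{*}C)$, since the difference is $(C^{T}\otimes A+A^{T}\otimes C)^{*}(C^{T}\otimes A+A^{T}\otimes C)\ge O_{mn}$; and (iii) via Weyl's inequality, $\lambda_{2}$ of the right-hand side is at most $\sigma_{1}^{2}(C)(\sigma_{1}^{2}(A)+\sigma_{2}^{2}(A))$. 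The ``multiplicity doubling'' in (i) is the rigorous substitute for the cross-term cancellation you hope for, and note that it bounds $\lambda_{1}$ by $\lambda_{2}$ of a comparison operator --- no restriction on the rank of $A$ is ever needed. Your $m=1$ or $n=1$ sharpening is likewise only a heuristic; the paper proves it by a separate direct computation using the singular value decomposition of $C$ and a Cauchy--Schwarz estimate.
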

(\ref{ineq:gbwineq1}) is a generalization of (\ref{ineq:sharpenbwineq1}), because when $m=n$ and $C$ is the $n\times n$ identity matrix $I_{n}$, the left- and right-hand sides of (\ref{ineq:gbwineq1}) coincide with those of (\ref{ineq:sharpenbwineq1}), due to the fact that $\|C\|_{2}=1$ and $ABC-CBA=AB-BA$. Moreover, from (\ref{ineq:gbwineq1}) and the fact that $\|A\|_{(2),2}\le \|A\|_{F}$, it follows that 
\begin{align*}
  \|ABC-CBA\|^{2}_{F}\le 2\|C\|_{2}^{2}\|A\|^{2}_{F}\|B\|^{2}_{F},
\end{align*}
which is a generalization of (\ref{ineq:bwineq}). Regarding (\ref{ineq:sharpengbwineq1}), note that $\|Z\|_{2}=\|Z\|_{F}=\|Z\|_{(2),2}$ for any $m\times n$ complex matrice $Z$ when $n=1$ or $m=1$. Since the right-hand side of (\ref{ineq:sharpengbwineq1}) satisfies $\|C\|_{2}^{2}\|A\|^{2}_{(2),2}\|B\|_{F}^{2}\le 2\|C\|_{2}^{2}\|A\|^{2}_{(2),2}\|B\|_{F}^{2}$, it follows that the upper bound in (\ref{ineq:sharpengbwineq1}) is tighter than the upper bound in (\ref{ineq:gbwineq1}).
\par The structure of this paper is as follows. In Section 2, we prove Theorem \ref{thm:gbwineq1}. In Section3, we examine whether there exists an inequality for the generalized commutator $ABC-CBA$ that may be regarded as a generalization of the B\"{o}ttcher-Wenzel inequality in a direction different from Theorem \ref{thm:gbwineq1}. \par
Before proceeding to the next Section, we briefly introduce the additional notation used in this paper. We denote by $M_{m,n}(\BC)$ a vector space on the set of all $m\times n$ complex matrices with the inner product $(A,B)= \Tr(AB^{*})$. Define $O_{m\times n} \in M_{m,n}(\BC)$ as the zero matrix of size $m\times n$. Especially, we shall write $M_{n,n}(\BC)$, $M_{n,1}(\BC)$, $O_{n\times n}$, and $O_{n\times 1}$ simply as $M_{n}(\BC)$, $\BC^{n}$, $O_{n}$, and $\bm{0}_{n}$, respectively. Let $k,l$ be positive integers. Then, for any $k\times l$ matrix $A=[a_{i,j}]_{1\le i\le k, 1\le j\le l}$ and $m\times n$ matrix $B$, the $km\times ln$ matrix $A\otimes B=[a_{i,j}B]_{1\le i\le k, 1\le j\le l}$ is defined as the Kronecker product of $A$ and $B$. Suppose $A$ is an $n\times n$ square matrix. Then, we denote by $\Lambda(A)$ the multiset of eigenvalues of $A$, counted with algebraic multiplicities. Moreover, if the definition is valid, then for all $i\in \{1,\cdots ,n\},$ let $\lambda_{i}(A)\in \Lambda(A)$ denote the $i$-th largest eigenvalue of $A$. When $A$ and $B$ are Hermitian, the notation $A\ge B$ means that $A-B$ is positive semidefinite. We write $\Diag\{d_{1},\cdots ,d_{n}\}$ for the $n\times n$ diagonal matrix with diagonal elements $d_{i}\in \BC (1\le i\le n)$. For matrices $A_{i}\in M_{k_{i},l_{i}}(\BC) (1\le i\le n),$ we use $\Diag\{A_{1},\cdots , A_{n}\}$ to denote the $(\Sigma_{i=1}^{n}k_{i})\times (\Sigma_{i=1}^{n}l_{i})$ block diagonal matrix with $A_{1},\cdots ,A_{n}$ on the diagonal.

\section{A generalization of the B\"{o}ttcher-Wenzel inequality}
Let $A,C\in M_{m,n}(\BC)$ and $B\in M_{n,m}(\BC)$.\ If at least one of $A,B$ or $C$ is the zero matrix, then $ABC-CBA=O_{m\times n}$, and the inequality (\ref{ineq:gbwineq1}) holds. Henceforth, we consider the case where $A,C\ne O_{m\times n}$ and $B\ne O_{n\times m}$. To begin with, we consider the case when $n=1$ or $m=1$. We prove the inequality (\ref{ineq:sharpengbwineq1}) in the following Theorem.
\begin{thm}\label{thm:bwineqm1}
If $n=1$ or $m=1$, then 
\begin{align}\notag
\|ABC-CBA\|^{2}_{F} \le \|C\|^{2}_{2}\|A\|^{2}_{(2),2}\|B\|^{2}_{F}.
\end{align}
\end{thm}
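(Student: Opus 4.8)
The plan is to reduce the two degenerate cases to a single statement about three vectors in $\BC^{m}$, and then prove that statement via the nonnegativity of a Gram determinant. First I would dispose of the case $m=1$ by symmetry: taking the conjugate transpose sends $ABC-CBA$ to $C^{*}B^{*}A^{*}-A^{*}B^{*}C^{*}$, which is again a generalized commutator $A'B'C'-C'B'A'$ with $A'=C^{*}$, $B'=B^{*}$, $C'=A^{*}$, but now with the roles of $m$ and $n$ interchanged. Since $\|Z^{*}\|_{F}=\|Z\|_{F}$ and $Z^{*}$ has the same singular values as $Z$, and since in the degenerate situation one has $\|Z\|_{2}=\|Z\|_{(2),2}=\|Z\|_{F}$ for every such $Z$ (so the right-hand side is unchanged under $A\leftrightarrow C$), the case $m=1$ follows from the case $n=1$. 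Hence I would assume $n=1$.

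With $n=1$, write $A=\bm a$ and $C=\bm c$ as column vectors in $\BC^{m}$ and $B$ as a row vector. Then $BC$ and $BA$ are scalars, so $ABC-CBA=(BC)\bm a-(BA)\bm c$. Setting $\bm b=B^{*}\in\BC^{m}$ and using $(\,\cdot\,,\,\cdot\,)$ for the Hermitian inner product, one checks that $BC=(\bm c,\bm b)$ and $BA=(\bm a,\bm b)$, while $\|B\|_{F}=\|\bm b\|$, $\|A\|_{(2),2}=\|\bm a\|$, and $\|C\|_{2}=\|\bm c\|$ (a vector has a single nonzero singular value). Thus the theorem in this case becomes exactly the vector inequality
\begin{align*}
\|(\bm c,\bm b)\bm a-(\bm a,\bm b)\bm c\|^{2}\le \|\bm a\|^{2}\|\bm c\|^{2}\|\bm b\|^{2},
\end{align*}
which I will prove for arbitrary $\bm a,\bm b,\bm c\in\BC^{m}$.

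The key step is the identity
\begin{align*}
\|(\bm c,\bm b)\bm a-(\bm a,\bm b)\bm c\|^{2}=\|\bm b\|^{2}\bigl(\|\bm a\|^{2}\|\bm c\|^{2}-|(\bm a,\bm c)|^{2}\bigr)-\det G,
\end{align*}
where $G$ is the $3\times 3$ Gram matrix of $\bm a,\bm c,\bm b$. I would verify it by expanding the left-hand side as $|(\bm c,\bm b)|^{2}\|\bm a\|^{2}+|(\bm a,\bm b)|^{2}\|\bm c\|^{2}-2\,\mathrm{Re}\bigl((\bm c,\bm b)\overline{(\bm a,\bm b)}(\bm a,\bm c)\bigr)$ and matching it termwise against the cofactor expansion of $\det G$ along the row and column indexed by $\bm b$. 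Since $G$ is positive semidefinite we have $\det G\ge 0$, so discarding it gives the desired bound — in fact the sharper one with $\|\bm a\|^{2}\|\bm c\|^{2}-|(\bm a,\bm c)|^{2}$ in place of $\|\bm a\|^{2}\|\bm c\|^{2}$, and dropping $-|(\bm a,\bm c)|^{2}\le 0$ recovers the stated form.

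The main obstacle is essentially bookkeeping: getting the conjugations right when rewriting the scalar factors $BC$ and $BA$ as inner products $(\bm c,\bm b)$ and $(\bm a,\bm b)$, and carrying out the Gram-determinant expansion without sign errors. The remaining pieces — the reduction of $m=1$ to $n=1$ by adjoint, and the passage from the sharper to the stated bound — are routine.
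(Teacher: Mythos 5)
Your proof is correct, and it takes a genuinely different route from the paper's. The paper handles $n=1$ by taking the singular value decomposition $C=U_{1}DU_{2}$, using unitary invariance of the Frobenius norm to replace $C$ by a multiple of the first standard basis vector, and then bounding the resulting explicit coordinate expression $\sigma_{1}^{2}(C)\bigl(|\sum_{i=2}^{m}x_{i}y_{i}|^{2}+\sum_{i=2}^{m}|x_{1}y_{i}|^{2}\bigr)$ by Cauchy--Schwarz; the case $m=1$ is dispatched with ``follows analogously.'' You instead recast the statement as the coordinate-free vector inequality $\|(\bm{c},\bm{b})\bm{a}-(\bm{a},\bm{b})\bm{c}\|^{2}\le\|\bm{a}\|^{2}\|\bm{c}\|^{2}\|\bm{b}\|^{2}$ and prove it from the identity
\begin{align*}
\|(\bm{c},\bm{b})\bm{a}-(\bm{a},\bm{b})\bm{c}\|^{2}=\|\bm{b}\|^{2}\bigl(\|\bm{a}\|^{2}\|\bm{c}\|^{2}-|(\bm{a},\bm{c})|^{2}\bigr)-\det G,
\end{align*}
where $G$ is the Gram matrix of $\bm{a},\bm{c},\bm{b}$; this identity checks out (both sides expand to $|(\bm{c},\bm{b})|^{2}\|\bm{a}\|^{2}+|(\bm{a},\bm{b})|^{2}\|\bm{c}\|^{2}-2\,\mathrm{Re}\bigl((\bm{c},\bm{b})\overline{(\bm{a},\bm{b})}(\bm{a},\bm{c})\bigr)$), and $\det G\ge 0$ since Gram matrices are positive semidefinite. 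Your explicit adjoint reduction of $m=1$ to $n=1$ is also sound, because the right-hand side is symmetric under $A\leftrightarrow C$ when $A$ and $C$ are vectors (all three norms coincide then). What your route buys: the Gram identity immediately yields the sharper bound $\|\bm{b}\|^{2}\bigl(\|\bm{a}\|^{2}\|\bm{c}\|^{2}-|(\bm{a},\bm{c})|^{2}\bigr)=\frac{\|B\|_{F}^{2}}{2}\|A\otimes C-C\otimes A\|_{F}^{2}$, which is exactly Proposition \ref{prop:tightehrubofgm1}; the paper obtains that statement only later, as a corollary of Proposition \ref{prop:gstbw}, via the vectorization operator, the matrix $K_{A,C}$, and the eigenvalue-multiplicity argument. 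So your single elementary computation subsumes both Theorem \ref{thm:bwineqm1} and Proposition \ref{prop:tightehrubofgm1}, whereas the paper's SVD computation gives only the weaker bound directly but fits the unitary-invariance style of reduction used elsewhere in the paper.
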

\begin{proof} 
We prove the statement only for the case $n=1$. The case $m=1$ follows analogously.
Let $C =U_{1}DU_{2}$ be the singular value decomposition of $C$, where $U_{1}$ is an $m\times m$ unitary matrix, $D=[\sigma_{1}(C), 0, \cdots,0]^{T}$, and $U_{2}$ is an $1\times 1$ unitary matrix. Put $X=[x_{1},\cdots ,x_{m}]=U^{*}_{2}BU_{1}^{*}$, $Y=[y_{1}, \cdots, y_{m}]^{T}=U_{1}AU_{2}.$ Since the Frobenius norm is unitarily invariant, it follows that
\begin{align}\notag
\|ABC-CBA\|^{2}_{F}&=\|CBA-ABC\|^{2}_{F}\\ \notag
&=\|DXY-YXD\|^{2}_{F}\\ \notag
&=\|\sigma_{1}(C)(\begin{bmatrix}
  \Sigma_{i=1}^{m}x_{i}y_{i}\\
  0\\
  \vdots \\
  0
\end{bmatrix} -
\begin{bmatrix}
  x_{1}y_{1}\\
  x_{1}y_{2}\\
  \vdots \\
  x_{1}y_{m}
\end{bmatrix})\|_{F}^{2}\\ \notag
&=\sigma^{2}_{1}(C)(|\Sigma_{i=2}^{m}x_{i}y_{i}|^{2}+\Sigma_{i=2}^{m}|x_{1}y_{i}|^{2}).
\end{align}
Here, since $|\Sigma_{i=2}^{m}x_{i}y_{i}|^{2}\le \Sigma_{2\le i,j\le m}|x_{i}|^{2}|y_{j}|^{2}$, we have 
\begin{align}\notag
\sigma^{2}_{1}(C)(|\Sigma_{i=2}^{m}x_{i}y_{i}|^{2}+\Sigma_{i=2}^{m}|x_{1}y_{i}|^{2})
&\le \sigma^{2}_{1}(C)(\Sigma_{2\le i,j\le m}^{}|x_{i}|^{2}|y_{j}|^{2} +|x_{1}|^{2}\Sigma_{i=2}^{m}|y_{i}|^{2})\\ \notag
&= \sigma^{2}_{1}(C)\|X\|^{2}_{F} \Sigma_{i=2}^{m}|y_{i}|^{2} \\ \notag
&\le \sigma^{2}_{1}(C)\|Y\|_{F}^{2}\|X\|^{2}_{F} =\|C\|_{2}^{2}\|A\|_{(2),2}^{2}\|B\|_{F}^{2}. \qedhere
\end{align}
\end{proof}
In fact, when $n=1$ or $m=1$, the Frobenius norm of $ABC-CBA$ has an upper bound tighter than the right-hand side of (\ref{ineq:sharpengbwineq1}).
\begin{prop}\label{prop:tightehrubofgm1}
If $m=1$ or $n=1$, then
\begin{align*}
  \|ABC-CBA\|_{F}^{2}\le \frac{\|B\|_{F}^{2}}{2}\|A\otimes C-C\otimes A\|^{2}_{F}.
\end{align*}
\end{prop}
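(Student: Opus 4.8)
The plan is to carry out the case $n=1$ in full; the case $m=1$ is completely analogous. When $n=1$ the matrices $A,C$ are column vectors in $\BC^{m}$ and $B$ is a row vector, so $BC$ and $BA$ are scalars and $ABC-CBA=(BC)A-(BA)C$. Writing $a_{k},c_{k}$ for the entries of $A,C$ and setting $b:=B^{T}\in\BC^{m}$, a componentwise computation gives
\[
(ABC-CBA)_{k}=(BC)a_{k}-(BA)c_{k}=\sum_{i=1}^{m}b_{i}(a_{k}c_{i}-c_{k}a_{i})=(Mb)_{k},\qquad M:=AC^{T}-CA^{T}.
\]
Thus I would reduce the whole estimate to the single matrix $M$, which is skew-symmetric, $M^{T}=-M$, and has rank at most $2$ since its columns lie in $\Sp\{A,C\}$.

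Two elementary identifications then handle the bookkeeping. First, by the definition of the spectral norm,
\[
\|ABC-CBA\|_{F}^{2}=\|Mb\|_{F}^{2}\le\|M\|_{2}^{2}\,\|b\|_{F}^{2}=\|M\|_{2}^{2}\,\|B\|_{F}^{2}.
\]
Second, the entries of $A\otimes C-C\otimes A$ are exactly the scalars $a_{k}c_{i}-c_{k}a_{i}=M_{ki}$ listed in some fixed order, so $\|A\otimes C-C\otimes A\|_{F}^{2}=\|M\|_{F}^{2}$. Combining these, the proposition is reduced to the single inequality $\|M\|_{2}^{2}\le\tfrac12\|M\|_{F}^{2}$.

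The heart of the matter, and the step I expect to be the main obstacle, is to show that this inequality is in fact an equality for our $M$, i.e.\ that the two (at most) nonzero singular values of $M$ coincide. I would prove this by combining the skew-symmetry with the low rank. Factor $P:=[A\ C]\in M_{m,2}(\BC)$ as $P=QR$ with $Q\in M_{m,2}(\BC)$ having orthonormal columns and $R\in M_{2}(\BC)$; then $M=PJP^{T}=QSQ^{T}$, where $J=\begin{bmatrix}0&1\\-1&0\end{bmatrix}$ and $S:=RJR^{T}$. Using $Q^{*}Q=I_{2}$, hence also $Q^{T}\overline{Q}=I_{2}$, one checks that $M^{*}M=\overline{Q}\,(S^{*}S)\,Q^{T}$ has the same nonzero eigenvalues as $S^{*}S$, so $M$ and the $2\times2$ matrix $S$ share their singular values. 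But $S$ is again skew-symmetric, so $S=\begin{bmatrix}0&s\\-s&0\end{bmatrix}$ for some $s\in\BC$, whose two singular values both equal $|s|$. Hence $\sigma_{1}(M)=\sigma_{2}(M)$ and $\|M\|_{2}^{2}=\tfrac12\|M\|_{F}^{2}$, which closes the chain
\[
\|ABC-CBA\|_{F}^{2}\le\|M\|_{2}^{2}\,\|B\|_{F}^{2}=\tfrac12\|M\|_{F}^{2}\,\|B\|_{F}^{2}=\frac{\|B\|_{F}^{2}}{2}\,\|A\otimes C-C\otimes A\|_{F}^{2}.
\]
The only subtlety to watch is that the congruence $M=QSQ^{T}$ uses the transpose $Q^{T}$ rather than the adjoint $Q^{*}$; this is precisely why the transpose-antisymmetry of $M$, rather than any Hermitian property, is what drives the pairing of singular values.
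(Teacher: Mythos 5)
Your proof is correct (up to one trivial caveat) and follows a genuinely different route from the paper's. The paper disposes of this proposition by simply citing its Proposition~\ref{prop:gstbw}, which gives the same bound for \emph{all} $m,n$: there one estimates $\|ABC-CBA\|_{F}^{2}\le \sigma_{1}^{2}(K_{A,C})\|B\|_{F}^{2}$ with $K_{A,C}=C^{T}\otimes A-A^{T}\otimes C$, and the crucial doubling step $2\sigma_{1}^{2}(K_{A,C})\le\|K_{A,C}\|_{F}^{2}$ comes from Proposition~\ref{prop:propsofKAC}, whose proof constructs, for each positive eigenvalue of $K_{A,C}^{*}K_{A,C}$, a second orthogonal eigenvector $-K_{A,C}^{*}\VEC(Y^{*})$ out of the commutator structure. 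You instead exploit structure special to the vector case: your $M=AC^{T}-CA^{T}$ is exactly $K_{A,C}$ when $n=1$ (since $C^{T}\otimes A=AC^{T}$ for a column $A$ and row $C^{T}$), and, being complex skew-symmetric of rank at most $2$, it is congruent via your QR step $M=QSQ^{T}$ to the single block $S=\det(R)\,J$, whence $\sigma_{1}(M)=\sigma_{2}(M)$ and all other singular values vanish. Your route buys an elementary, self-contained argument (no vectorization lemma, no eigenvector construction) and actually proves the sharper \emph{equality} $2\|M\|_{2}^{2}=\|M\|_{F}^{2}$, so the only slack left in the bound is the step $\|Mb\|_{2}\le\|M\|_{2}\|b\|_{2}$; it also isolates the structural reason for the pairing, namely transpose-skew-symmetry, which is precisely the substitute for the normality/eigenvalue argument of \cite{ANor} that the paper notes breaks down over $\BC$. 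The paper's route buys generality: one argument covers all $m,n$, and this proposition falls out as a special case. Two minor repairs to yours: handle $m=n=1$ separately (there $P=[A\;C]$ is $1\times 2$ and admits no factor $Q$ with two orthonormal columns, but $ABC-CBA=0$ trivially since scalars commute), and justify that $M^{*}M=\overline{Q}(S^{*}S)Q^{T}$ has the same nonzero eigenvalues as $S^{*}S$, e.g.\ by $Q^{T}\overline{Q}=I_{2}$ together with the fact that $XY$ and $YX$ share their nonzero spectrum.
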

\begin{proof}
See the proof of Proposition \ref{prop:gstbw}.
\end{proof}
From now on, we assume that $m, n\ge 2$. In the following discussion, our argument is guided by Section 2 and 3 of \cite{OnSome}.\ We simply write the generalized commutator $ABC-CBA$ as $[A,C]_{B}$. 
\begin{lem}\label{lem:propsofgencom}
A generalized commutator $[A,C]_{B}$ has the following properties.
\begin{description}
  \item{\textup{($\mathrm{i}$)}} $[A,C]_{B}=-[C,A]_{B}$ for all $A,C\in M_{m,n}(\BC)$ and $B\in M_{n,m}(\BC)$.
  \item{\textup{($\mathrm{ii}$)}} $([A,C]_{B})^{*}=-[A^{*},C^{*}]_{B^{*}}$ for all $A,C\in M_{m,n}(\BC)$ and $B\in M_{n,m}(\BC)$.
  \item{\textup{($\mathrm{iii}$)}} $[A,C]_{\alpha B_{1}+\beta B_{2}}=\alpha[A,C]_{B_{1}}+\beta[A,C]_{B_{2}}$ for all $\alpha ,\beta \in \BC$, $A,C\in M_{m,n}(\BC)$, and $B_{1},\ B_{2}\in M_{n,m}(\BC)$
\end{description}
\end{lem}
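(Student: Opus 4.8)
The plan is to verify all three identities directly from the definition $[A,C]_{B}=ABC-CBA$, since each is a purely algebraic consequence of the ring structure of matrix multiplication together with the standard behavior of the conjugate transpose. No norm estimate or inequality is involved, so the three parts are independent and may be checked in any order. Before starting I would record the dimension bookkeeping that makes every product below well-defined: with $A,C\in M_{m,n}(\BC)$ and $B\in M_{n,m}(\BC)$, the product $ABC$ has size $m\times n$, so $[A,C]_{B}\in M_{m,n}(\BC)$.

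For (i), I would simply swap the roles of $A$ and $C$ in the definition to get $[C,A]_{B}=CBA-ABC$, which is the negative of $ABC-CBA=[A,C]_{B}$; this gives the claim at once. For (iii), I would expand $[A,C]_{\alpha B_{1}+\beta B_{2}}=A(\alpha B_{1}+\beta B_{2})C-C(\alpha B_{1}+\beta B_{2})A$, then use distributivity of matrix multiplication over addition and the fact that scalars commute with matrices to split this into four terms; collecting the coefficients of $\alpha$ and of $\beta$ produces exactly $\alpha(AB_{1}C-CB_{1}A)+\beta(AB_{2}C-CB_{2}A)=\alpha[A,C]_{B_{1}}+\beta[A,C]_{B_{2}}$.

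For (ii), the only nontrivial ingredient is the reversal rule $(XYZ)^{*}=Z^{*}Y^{*}X^{*}$ for the conjugate transpose of a product. Applying it termwise gives $([A,C]_{B})^{*}=(ABC)^{*}-(CBA)^{*}=C^{*}B^{*}A^{*}-A^{*}B^{*}C^{*}$, and I would then recognize the right-hand side as $-(A^{*}B^{*}C^{*}-C^{*}B^{*}A^{*})=-[A^{*},C^{*}]_{B^{*}}$. Here it is worth confirming that the adjoint commutator is formed with the correct sizes: since $A^{*},C^{*}\in M_{n,m}(\BC)$ and $B^{*}\in M_{m,n}(\BC)$, the expression $[A^{*},C^{*}]_{B^{*}}$ is well-defined and lies in $M_{n,m}(\BC)$, which matches the size of $([A,C]_{B})^{*}$.

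I do not anticipate any genuine obstacle, as the entire lemma is routine algebraic bookkeeping. The only point that warrants a moment's care is the dimension check in part (ii), ensuring that the generalized commutator of the adjoints is taken with respect to the transposed matrix shapes; the size count above settles this, and the rest is immediate.
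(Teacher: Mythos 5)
Your proof is correct: all three identities follow by the direct expansions you give, and your dimension check in (ii) (that $A^{*},C^{*}\in M_{n,m}(\BC)$ and $B^{*}\in M_{m,n}(\BC)$ make $[A^{*},C^{*}]_{B^{*}}$ well-defined) is exactly the one point needing care. The paper states this lemma without any proof, treating it as immediate, and your routine verification is precisely the argument it implicitly relies on.
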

We denote by $\VEC$ the vectorization map that stacks the columns of a matrix into a column vector. For example, for $B=[\bm{b}_{1}, \cdots ,\bm{b}_{m}] \in M_{n,m}(\BC)$, 
\begin{align}\notag
\VEC:M_{n,m}(\BC)\to \BC^{mn};B\mapsto \begin{bmatrix}
  \bm{b}_{1}\\
  \vdots \\
  \bm{b}_{m}
\end{bmatrix}.
\end{align}
\begin{lem}[\cite{Matrix}]\label{lem:popsofvec}
The map $\VEC$ has the following properties.
\begin{description}
  \item{\textup{($\mathrm{i}$)}} $\VEC$ is  linear and bijective.
  \item{\textup{($\mathrm{ii}$)}} $(\VEC(X),\VEC(Y))=(X,Y)$ for all $X,Y\in M_{m,n}(\BC).$
  \item{\textup{($\mathrm{iii}$)}} $\VEC(XYZ)=(Z^{T}\otimes X)\VEC(Y)$ for all $X,Z\in M_{m,n}(\BC)$ and $Y\in M_{n,m}(\BC)$. 
\end{description}
\end{lem}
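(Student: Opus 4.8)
The three properties are classical facts about the vectorization map (the so-called ``vec-trick''), recorded here from \cite{Matrix}, so the plan is to derive each one directly from the definition of $\VEC$ together with the block structure of the Kronecker product, rather than to invoke any machinery. Citing \cite{Matrix} is enough for the paper, but the arguments below make the lemma self-contained.

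For (i), linearity is immediate: stacking the columns of $\alpha X+\beta Y$ produces exactly $\alpha\VEC(X)+\beta\VEC(Y)$, since each column depends linearly on the matrix entries. For bijectivity I would note that $\VEC$ maps between two spaces of the same finite dimension $mn$, so it suffices to check injectivity; but $\VEC(B)=\bm{0}_{mn}$ forces every column of $B$ to vanish, hence $B=O_{n\times m}$. (Equivalently, one writes down the obvious reshaping map as a two-sided inverse.)

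For (ii), the cleanest route is to observe that both inner products equal the same double sum $\sum_{i,j}x_{i,j}\overline{y_{i,j}}$. On the right, $(X,Y)=\Tr(XY^{*})=\sum_{i,j}x_{i,j}\overline{y_{i,j}}$ by expanding the trace. On the left, the standard Hermitian inner product on $\BC^{mn}$ sums the products of corresponding coordinates, and $\VEC$ merely reindexes the list of entries of $X$ and $Y$ without changing which entry pairs with which; hence the two sums coincide.

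Property (iii) is the substantive one, and I expect the index bookkeeping to be the only real obstacle. I would argue column by column. Writing $Z=[\bm{z}_{1},\ldots,\bm{z}_{n}]\in M_{m,n}(\BC)$, the $k$-th column of $XYZ$ is $XY\bm{z}_{k}$, so the $k$-th block of $\VEC(XYZ)$ is $XY\bm{z}_{k}$. On the other side, the $(k,l)$-block of $Z^{T}\otimes X$ is $(Z^{T})_{k,l}X=z_{l,k}X$, while the $l$-th block of $\VEC(Y)$ is the $l$-th column $\bm{y}_{l}$ of $Y$; hence the $k$-th block of $(Z^{T}\otimes X)\VEC(Y)$ equals $\sum_{l}z_{l,k}X\bm{y}_{l}=X\bigl(\sum_{l}z_{l,k}\bm{y}_{l}\bigr)=XY\bm{z}_{k}$, using $Y\bm{z}_{k}=\sum_{l}z_{l,k}\bm{y}_{l}$. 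Matching the two expressions block by block over all $k$ yields the identity. The one thing to be careful about is aligning the block indices of $Z^{T}\otimes X$ with the column ordering that $\VEC$ imposes and with the conjugation-free transpose appearing as $Z^{T}$; once that dictionary is fixed the computation is routine, and it transparently forces the transpose (rather than conjugate transpose) on $Z$.
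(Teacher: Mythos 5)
Your proposal is correct. The paper itself gives no proof of this lemma---it is quoted directly from the cited reference \cite{Matrix} as a known fact---so there is nothing in the paper to diverge from; your arguments are the standard textbook ones, and in particular the block-by-block computation for (iii), matching the $k$-th block $XY\bm{z}_{k}$ of $\VEC(XYZ)$ against $\sum_{l}z_{l,k}X\bm{y}_{l}$, is exactly the classical derivation of the vec--Kronecker identity, with the dimensions ($X,Z\in M_{m,n}(\BC)$, $Y\in M_{n,m}(\BC)$) handled correctly.
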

By Lemma \ref{lem:popsofvec} \textup{($\mathrm{i}$)}, $\VEC$ admits an inverse.\par

Here, we define $K_{A,C}$ to be $C^{T}\otimes A-A^{T}\otimes C$. Then, by Lemma \ref{lem:popsofvec} \textup{($\mathrm{iii}$)} and $\|B\|^{2}_{F}=\|\VEC(B)\|^{2}_{2}$, it follows that
\begin{align}\notag
\|ABC-CBA\|^{2}_{F}&=\|(C^{T}\otimes A-A^{T}\otimes C)\VEC(B)\|^{2}_{2} \\ \notag                &=\|K_{A,C}\VEC(B)\|^{2}_{2}\\ \notag
     &\le \|K_{A,C}\|^{2}_{2}\|\VEC(B)\|^{2}_{2}\\ \notag
     &=\lambda_{1}(K_{A,C}^{*}K_{A,C})\|B\|^{2}_{F}.
\end{align}
Therefore, by proving $\lambda_{1}(K_{A,C}^{*}K_{A,C})\le 2\|C\|^{2}_{2}\|A\|_{(2),2}^{2}$, we can conclude that (\ref{ineq:gbwineq1}) holds. Using an argument analogous to Proposition 2.2 in \cite{OnSome}, we obtain the following result for $K_{A,C}^{*}K_{A,C}.$
\begin{prop}\label{prop:propsofKAC}
The geometric multiplicity of every positive eigenvalue of $K_{A,C}^{*}K_{A,C}$ is at least 2.
\end{prop}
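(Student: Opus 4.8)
The plan is to recast the statement as an assertion about a linear operator on matrix space and then, for each positive eigenvalue, exhibit a canonical second eigenvector coming from a conjugate-linear symmetry. First I would write $\Phi\colon M_{n,m}(\BC)\to M_{m,n}(\BC)$ for the linear map $\Phi(B)=ABC-CBA=[A,C]_{B}$, so that Lemma~\ref{lem:popsofvec}(iii) gives $K_{A,C}\VEC(B)=\VEC(\Phi(B))$. Since $\VEC$ is a unitary isomorphism (Lemma~\ref{lem:popsofvec}(i),(ii)), the matrix $K_{A,C}^{*}K_{A,C}$ is unitarily equivalent to the self-adjoint operator $\Psi:=\Phi^{*}\Phi$ on $M_{n,m}(\BC)$, where $\Phi^{*}$ is the adjoint of $\Phi$ with respect to $(\cdot,\cdot)$. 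Unitary equivalence preserves eigenvalues together with their geometric multiplicities, so it suffices to prove that every positive eigenvalue of $\Psi$ has geometric multiplicity at least $2$.

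Next I would identify $\Phi^{*}$ by a short adjoint computation: expanding $(\Phi(B),D)=\Tr\bigl((ABC-CBA)D^{*}\bigr)$ and using cyclicity of the trace yields $\Phi^{*}(D)=A^{*}DC^{*}-C^{*}DA^{*}=[A^{*},C^{*}]_{D}$ for $D\in M_{m,n}(\BC)$. Then I introduce the conjugate-linear map $R\colon M_{n,m}(\BC)\to M_{n,m}(\BC)$ defined by $R(B)=(\Phi(B))^{*}=(ABC-CBA)^{*}$, which is well defined precisely because $\Phi(B)\in M_{m,n}(\BC)$ forces $(\Phi(B))^{*}\in M_{n,m}(\BC)$. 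The central identity I aim to establish is $R^{2}=-\Psi$. Applying Lemma~\ref{lem:propsofgencom}(ii) with $B$ replaced by $D^{*}$ gives $(\Phi(D^{*}))^{*}=-[A^{*},C^{*}]_{D}=-\Phi^{*}(D)$ for every $D\in M_{m,n}(\BC)$; taking $D=\Phi(B)$ then produces $R^{2}(B)=(\Phi((\Phi(B))^{*}))^{*}=-\Phi^{*}(\Phi(B))=-\Psi(B)$, as wanted.

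With $R^{2}=-\Psi$ secured, the conclusion is formal. Because $\Psi=-R^{2}$, the map $R$ commutes with $\Psi$. Let $\sigma^{2}>0$ be an eigenvalue of $\Psi$ and let $B\neq O_{n\times m}$ satisfy $\Psi(B)=\sigma^{2}B$. Since $R$ is conjugate-linear and $\sigma^{2}$ is real, $\Psi(R(B))=R(\Psi(B))=R(\sigma^{2}B)=\sigma^{2}R(B)$, so $R(B)$ lies in the same eigenspace. Moreover $B$ and $R(B)$ are linearly independent over $\BC$: if $R(B)=\lambda B$ for some $\lambda\in\BC$, then $-\sigma^{2}B=R^{2}(B)=R(\lambda B)=\overline{\lambda}\lambda B=|\lambda|^{2}B$, which forces $|\lambda|^{2}=-\sigma^{2}<0$, an impossibility. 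Hence the eigenspace of $\sigma^{2}$ has dimension at least $2$, giving the claim.

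The hard part will be the identity $R^{2}=-\Psi$: it requires careful tracking of conjugate transposes back and forth between the two rectangular spaces $M_{n,m}(\BC)$ and $M_{m,n}(\BC)$ and the correct identification of $\Phi^{*}$ with the generalized commutator $[A^{*},C^{*}]_{\bullet}$, both of which hinge on Lemma~\ref{lem:propsofgencom}(ii). Once that bookkeeping is settled, the reduction through $\VEC$ and the linear-independence argument are entirely routine, and notably the argument needs only the algebraic relation $R^{2}=-\Psi$ rather than any norm-preservation property of $R$.
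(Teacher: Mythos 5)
Your proof is correct, and its central construction coincides with the paper's: your conjugate-linear map $R(B)=(\Phi(B))^{*}=([A,C]_{B})^{*}$ produces, after vectorization, exactly the second eigenvector $-K_{A,C}^{*}\VEC(Y^{*})=\VEC(([A,C]_{Y})^{*})$ that the paper pairs with the original one, and both arguments hinge on Lemma \ref{lem:propsofgencom}(ii). Where you genuinely diverge is in the verification. The paper checks the two required facts by hand: orthogonality of the new vector to the old one via a trace-cyclicity computation ($(Y,([A,C]_{Y})^{*})=\Tr(YAYC-YAYC)=0$), and the eigenvector property via an explicit chain of identities involving $K_{A,C}K_{A,C}^{*}$ and $\VEC$. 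You instead compress everything into the single algebraic identity $R^{2}=-\Psi$ with $\Psi=\Phi^{*}\Phi$ (your derivation of it, and of the adjoint formula $\Phi^{*}(D)=[A^{*},C^{*}]_{D}$, is correct): the eigenvector property then follows formally because $R$ commutes with $\Psi=-R^{2}$, and linear independence follows because $R(B)=\lambda B$ would force $|\lambda|^{2}=-\sigma^{2}<0$ --- a Kramers-degeneracy-style argument exploiting conjugate-linearity. Your route buys economy and structure: non-vanishing of $R(B)$ never needs to be checked separately (the independence argument subsumes the case $\lambda=0$), and no inner-product computation is needed beyond identifying $\Phi^{*}$. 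The paper's route buys slightly more information, namely that the two eigenvectors are orthogonal rather than merely linearly independent. Both are complete proofs of the stated proposition.
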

\begin{proof}
Let $\phi>0$ be an
eigenvalue of $K_{A,C}^{*}K_{A,C}$ and let $\bm{y}\in \BC^{mn}\setminus \{\bm{0}_{mn}\}$ be an eigenvector of $K_{A,C}^{*}K_{A,C}$ corresponding to $\phi$. Put $Y=\VEC^{-1}(\bm{y}) \in M_{n,m}(\BC)$. Then, by Lemma \ref{lem:popsofvec} \textup{($\mathrm{iii}$)}, we have $\phi \bm{y}=\VEC(\phi Y)=K_{A,C}^{*}K_{A,C}\VEC(Y)=\VEC([A^{*},C^{*}]_{[A,C]_{Y}})$. Since $\phi \bm{y}\neq \bm{0}_{mn}$ and the fact that $\VEC$ operator is injective, we obtain $[A^{*},C^{*}]_{[A,C]_{Y}} \neq O_{m\times n}.$ We will now show that $-K_{A,C}^{*}\VEC(Y^{*})(\neq \bm{0}_{mn})$ is orthogonal to the vector $\bm{y}$ and is also an eigenvector of $K_{A,C}^{*}K_{A,C}$. First, by Lemma \ref{lem:propsofgencom} \textup{($\mathrm{ii}$)} and Lemma \ref{lem:popsofvec} \textup{($\mathrm{ii}$)}, we see that
\begin{align}\notag
(\bm{y},-K_{A,C}^{*}\VEC(Y^{*})) &=(\bm{y},\VEC(-[A^{*},C^{*}]_{Y^{*}}))\\ \notag
&=(\bm{y},\VEC(([A,C]_{Y})^{*})) \\ \notag
&=(Y,([A,C]_{Y})^{*})\\ \notag
&=\Tr(YAYC-YAYC)=0.
\end{align}
Moreover, it follows that 
\begin{align}\notag
K_{A,C}^{*}K_{A,C}(-K_{A,C}^{*}\VEC(Y^{*}))&=-K_{A,C}^{*}(K_{A,C}K_{A,C}^{*}\VEC(Y^{*})) \\ \notag
&=-K_{A,C}^{*}\VEC([A,C]_{[A^{*},C^{*}]_{Y^{*}}}) \\ \notag
&=-K_{A,C}^{*}\VEC(([A^{*},C^{*}]_{[A,C]_{Y}})^{*})\\ \notag
&=-K_{A,C}^{*}\VEC(\phi Y^{*})\\ \notag
&=\phi(-K_{A,C}^{*}\VEC(Y^{*})).
\end{align}
Therefore, since $\bm{y}$ and $-K_{A,C}^{*}\VEC(Y^{*})$ are linearly independent eigenvectors of $K_{A,C}^{*}K_{A,C}$ corresponding to $\phi$, the geometric multiplicity of $\phi$ is more than or equal to 2.
\end{proof}

\begin{prop}[\cite{Matrix}, Weyl's inequality]\label{prop:weylanditsappl}
Let $k$ be a positive integer and $X,Y \in M_{k}(\BC)$ be Hermitian matrices.
\begin{description}
  \item{\textup{($\mathrm{i}$)}} $\lambda_{j}(X+Y)\le \lambda_{i}(X)+\lambda_{j-i+1}(Y)$ for all $i,j\in\{1,\cdots,k\}$ with $i \le j$.
  \item{\textup{($\mathrm{ii}$)}} Suppose $X\ge Y$. Then $\lambda_{i}(X)\ge \lambda_{i}(Y)$ for all $ i \in \{1,\cdots,k\}$.
\end{description}
\end{prop}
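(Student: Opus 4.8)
The plan is to derive both parts from the Courant--Fischer min-max characterization of the eigenvalues of a Hermitian matrix, which is the standard route and requires no special structure beyond Hermiticity. With the convention $\lambda_{1}(Z)\ge \cdots \ge \lambda_{k}(Z)$ fixed in the paper, for any Hermitian $Z\in M_{k}(\BC)$ and any $i\in\{1,\cdots,k\}$ one has the two dual formulas
\[
\lambda_{i}(Z)=\max_{\dim S=i}\ \min_{\substack{v\in S\\ \|v\|_{2}=1}} v^{*}Zv
=\min_{\dim S=k-i+1}\ \max_{\substack{v\in S\\ \|v\|_{2}=1}} v^{*}Zv ,
\]
where $S$ ranges over linear subspaces of $\BC^{k}$ of the indicated dimension. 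I would take these as the starting point; they themselves follow from the spectral theorem by diagonalizing $Z$ and comparing the Rayleigh quotient $v^{*}Zv$ against the eigenbasis.

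For part (i), fix $i\le j$ and choose subspaces $S_{1},S_{2}\subseteq\BC^{k}$ attaining the minimum in the second (min-max) formula for $\lambda_{i}(X)$ and for $\lambda_{j-i+1}(Y)$, so that $\dim S_{1}=k-i+1$ and $\dim S_{2}=k-(j-i+1)+1=k-j+i$. The key dimension count is
\[
\dim(S_{1}\cap S_{2})\ge \dim S_{1}+\dim S_{2}-k=(k-i+1)+(k-j+i)-k=k-j+1 ,
\]
so I may pick any subspace $T\subseteq S_{1}\cap S_{2}$ with $\dim T=k-j+1$. Applying the min-max formula to $X+Y$ gives $\lambda_{j}(X+Y)\le \max_{v\in T,\ \|v\|_{2}=1} v^{*}(X+Y)v$, and splitting the Rayleigh quotient together with $T\subseteq S_{1}$ and $T\subseteq S_{2}$ yields $\max_{v\in T} v^{*}Xv\le \lambda_{i}(X)$ and $\max_{v\in T} v^{*}Yv\le \lambda_{j-i+1}(Y)$; adding these proves the claim.

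For part (ii) I would deduce it from (i) rather than argue independently. Writing $Y=X+(-Z)$ with $Z:=X-Y\ge O_{k}$, and applying (i) with $j=i$ to the pair $X,-Z$, one gets $\lambda_{i}(Y)\le \lambda_{i}(X)+\lambda_{1}(-Z)=\lambda_{i}(X)-\lambda_{k}(Z)\le \lambda_{i}(X)$, since $\lambda_{1}(-Z)=-\lambda_{k}(Z)$ and $Z\ge O_{k}$ forces $\lambda_{k}(Z)\ge 0$. Alternatively, part (ii) is immediate from the max-min formula alone: $X\ge Y$ means $v^{*}Xv\ge v^{*}Yv$ for every $v$, so the inner minimum over each subspace $S$ can only increase, and taking the maximum over $i$-dimensional $S$ preserves the inequality.

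The only genuinely delicate point is establishing the two dual min-max formulas in exactly the stated form and handling the subspace-intersection bound cleanly; everything afterward is routine bookkeeping with Rayleigh quotients. I expect the main obstacle to be purely notational, namely keeping the max-min and min-max characterizations straight and matching the dimensions $k-i+1$ and $k-j+i$ so that their intersection has the dimension $k-j+1$ needed to feed the min-max formula for $\lambda_{j}(X+Y)$.
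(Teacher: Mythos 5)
Your proof is correct in all details: the two dual Courant--Fischer formulas are stated with the right dimensions, the intersection bound $\dim(S_{1}\cap S_{2})\ge (k-i+1)+(k-j+i)-k=k-j+1$ is exactly what is needed to feed the min-max characterization of $\lambda_{j}(X+Y)$, and both of your derivations of part (ii) (from part (i) with $j=i$, or directly from monotonicity of Rayleigh quotients in the max-min formula) are sound. Note, however, that the paper offers no proof of this proposition to compare against: it is quoted as Weyl's inequality from \cite{Matrix} and used as a black box. Your argument is the standard textbook proof of that cited result, so you have simply supplied, correctly, the proof the paper delegates to the literature.
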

\begin{lem}[\cite{Matrix}]\label{lem:propkron}
 Let $X$ and $Y$ be positive semidefinite matrices of size $m$ and $n$, respectively. Then, $X\otimes Y$ is also positive semidefinite.
\end{lem}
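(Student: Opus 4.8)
The plan is to reduce the claim to the elementary fact that any matrix of the form $M^{*}M$ is positive semidefinite, combined with the multiplicativity of the Kronecker product. Here I rely on the standard mixed-product identity $(P\otimes Q)(R\otimes S)=(PR)\otimes(QS)$ together with the conjugate-transpose rule $(P\otimes Q)^{*}=P^{*}\otimes Q^{*}$, both valid whenever the sizes are compatible; these are the same basic Kronecker-product properties already used implicitly elsewhere in the paper.

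First I would use the characterization that a Hermitian matrix is positive semidefinite if and only if it factors as $M^{*}M$. Since $X$ is positive semidefinite of size $m$, it admits a positive semidefinite square root, so we may write $X=P^{*}P$ for some $m\times m$ matrix $P$ (for instance $P=X^{1/2}$). Likewise $Y=Q^{*}Q$ for some $n\times n$ matrix $Q$.

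Next I would combine these factorizations through the Kronecker product. Applying the conjugate-transpose rule and then the mixed-product identity gives
\begin{align}\notag
X\otimes Y=(P^{*}P)\otimes(Q^{*}Q)=(P^{*}\otimes Q^{*})(P\otimes Q)=(P\otimes Q)^{*}(P\otimes Q).
\end{align}
Setting $M=P\otimes Q$, the right-hand side is $M^{*}M$, which is positive semidefinite; hence $X\otimes Y$ is positive semidefinite.

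An equivalent route, if one prefers to avoid square roots, is to start from the spectral decompositions $X=U_{1}\Lambda_{1}U_{1}^{*}$ and $Y=U_{2}\Lambda_{2}U_{2}^{*}$ with $U_{1},U_{2}$ unitary and $\Lambda_{1},\Lambda_{2}$ nonnegative diagonal; the same mixed-product identity then yields $X\otimes Y=(U_{1}\otimes U_{2})(\Lambda_{1}\otimes\Lambda_{2})(U_{1}\otimes U_{2})^{*}$, where $U_{1}\otimes U_{2}$ is unitary and $\Lambda_{1}\otimes\Lambda_{2}$ is diagonal with nonnegative entries, exhibiting $X\otimes Y$ as unitarily similar to a nonnegative diagonal matrix. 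There is essentially no serious obstacle: the only point requiring care is the legitimacy of the mixed-product and conjugate-transpose identities for $\otimes$, which are standard, so the argument is short and self-contained.
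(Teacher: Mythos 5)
Your proof is correct. Note that the paper offers no proof of this lemma at all—it is stated with a citation to its matrix-analysis reference and used as a known fact—so there is no internal argument to compare against; your square-root factorization $X\otimes Y=(P\otimes Q)^{*}(P\otimes Q)$ (and the equivalent spectral-decomposition route) is exactly the standard textbook proof that such a citation points to, and both ingredients you invoke (existence of a positive semidefinite square root, the mixed-product and conjugate-transpose identities for $\otimes$) are sound.
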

The following Lemma can be proved by an argument similar to that of Theorem 3.6 in \cite{OnSome}.
\begin{lem}\label{lem:psdofKAC}
$2(\overline{C}C^{T}\otimes A^{*}A +\overline{A}A^{T}\otimes C^{*}C)\ge K_{A,C}^{*}K_{A,C}$.
\end{lem}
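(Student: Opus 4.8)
The plan is to recognize the claimed inequality as the operator analogue of the parallelogram law. For any two matrices $P$ and $Q$ of the same size one has the identity
\begin{align}\notag
(P-Q)^{*}(P-Q)+(P+Q)^{*}(P+Q)=2\left(P^{*}P+Q^{*}Q\right),
\end{align}
in which every term is Hermitian and each of the two summands on the left is positive semidefinite. I would apply this with $P=C^{T}\otimes A$ and $Q=A^{T}\otimes C$, so that $P-Q=K_{A,C}$ by definition, and introduce the companion operator $L:=P+Q=C^{T}\otimes A+A^{T}\otimes C$.

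First I would record the conjugate transposes. Using $(X\otimes Y)^{*}=X^{*}\otimes Y^{*}$ together with $(C^{T})^{*}=\overline{C}$ and $(A^{T})^{*}=\overline{A}$, one obtains $P^{*}=\overline{C}\otimes A^{*}$ and $Q^{*}=\overline{A}\otimes C^{*}$, hence $K_{A,C}^{*}=\overline{C}\otimes A^{*}-\overline{A}\otimes C^{*}$ and $L^{*}=\overline{C}\otimes A^{*}+\overline{A}\otimes C^{*}$. Next, applying the mixed-product property $(X\otimes Y)(Z\otimes W)=XZ\otimes YW$ gives
\begin{align}\notag
P^{*}P=\overline{C}C^{T}\otimes A^{*}A,\qquad Q^{*}Q=\overline{A}A^{T}\otimes C^{*}C.
\end{align}

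Substituting these into the parallelogram identity yields
\begin{align}\notag
K_{A,C}^{*}K_{A,C}+L^{*}L=2\left(\overline{C}C^{T}\otimes A^{*}A+\overline{A}A^{T}\otimes C^{*}C\right),
\end{align}
so that
\begin{align}\notag
2\left(\overline{C}C^{T}\otimes A^{*}A+\overline{A}A^{T}\otimes C^{*}C\right)-K_{A,C}^{*}K_{A,C}=L^{*}L.
\end{align}
Since $L^{*}L$ is positive semidefinite (indeed $v^{*}L^{*}Lv=\|Lv\|_{2}^{2}\ge 0$ for every $v\in\BC^{mn}$), the claim follows at once.

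The computation is essentially forced once the parallelogram viewpoint is adopted, so there is no serious obstacle; the only point requiring care is the bookkeeping of conjugate transposes under the Kronecker product—namely verifying $(C^{T})^{*}=\overline{C}$ and that $K_{A,C}^{*}$ and $L^{*}$ take the stated forms—after which the mixed-product property produces the identity directly. I would also note that one could instead expand $K_{A,C}^{*}K_{A,C}$ term by term and recognize the remainder $\overline{C}C^{T}\otimes A^{*}A+\overline{A}A^{T}\otimes C^{*}C+\overline{C}A^{T}\otimes A^{*}C+\overline{A}C^{T}\otimes C^{*}A$ as the factorization $L^{*}L$; the parallelogram identity merely makes this factorization transparent and spares one from handling the cross terms explicitly.
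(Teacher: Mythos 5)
Your proposal is correct and is essentially the paper's own proof: the paper also verifies that $2(\overline{C}C^{T}\otimes A^{*}A+\overline{A}A^{T}\otimes C^{*}C)-K_{A,C}^{*}K_{A,C}=(C^{T}\otimes A+A^{T}\otimes C)^{*}(C^{T}\otimes A+A^{T}\otimes C)\ge O_{mn}$, which is exactly your factorization with $L=C^{T}\otimes A+A^{T}\otimes C$. Your parallelogram-law framing and explicit bookkeeping of the mixed-product computations merely spell out the algebra the paper declares obvious.
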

\begin{proof}
The result is obvious because
\begin{align}\notag
2(\overline{C}C^{T}\otimes A^{*}A +\overline{A}A^{T}\otimes C^{*}C)- K_{A,C}^{*}K_{A,C}&=(C^{T}\otimes A+A^{T}\otimes C)^{*}(C^{T}\otimes A+A^{T}\otimes C)\\ \notag
&\ge O_{mn}. \qedhere
\end{align}
\end{proof}
From the above Lemma, Proposition \ref{prop:propsofKAC}, and Proposition \ref{prop:weylanditsappl} \textup{($\mathrm{ii}$)}, we conclude that 
\begin{align}\label{neq:tipofGBW}
  \lambda_{1}(K^{*}_{A,C}K_{A,C})=\lambda_{2}(K_{A,C}^{*}K_{A,C})\le2\lambda_{2}(\overline{C}C^{T}\otimes A^{*}A+\overline{A}A^{T}\otimes C^{*}C).
\end{align}
\begin{thm}
  $\lambda_{2}(\overline{C}C^{T}\otimes A^{*}A+\overline{A}A^{T}\otimes C^{*}C) \le \sigma_{1}^{2}(C)(\sigma_{1}^{2}(A)+\sigma_{2}^{2}(A)).$
\end{thm}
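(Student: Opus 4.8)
The plan is to separate the roles of $C$ and $A$: first I would bound the whole matrix, in the L\"owner order, by $\sigma_1^2(C)$ times a matrix built only from $A$, and then compute the second eigenvalue of that matrix exactly.

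First I would record that each building block is positive semidefinite with a known spectrum. Since $(\overline{C})^*=C^T$, we have $\overline{C}C^T=\overline{C}(\overline{C})^*\ge O_m$ with eigenvalues $\sigma_i^2(C)$; likewise $C^*C\ge O_n$ has eigenvalues $\sigma_i^2(C)$, while $\overline{A}A^T=\overline{A}(\overline{A})^*\ge O_m$ and $A^*A\ge O_n$ both have eigenvalues $\sigma_i^2(A)$. In particular $\overline{C}C^T\le\sigma_1^2(C)I_m$ and $C^*C\le\sigma_1^2(C)I_n$.

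Second, I would push these L\"owner bounds through the Kronecker products using Lemma~\ref{lem:propkron}. Because $\sigma_1^2(C)I_m-\overline{C}C^T\ge O_m$ and $A^*A\ge O_n$, Lemma~\ref{lem:propkron} gives $(\sigma_1^2(C)I_m-\overline{C}C^T)\otimes A^*A\ge O_{mn}$, i.e. $\overline{C}C^T\otimes A^*A\le\sigma_1^2(C)(I_m\otimes A^*A)$; symmetrically $\overline{A}A^T\otimes C^*C\le\sigma_1^2(C)(\overline{A}A^T\otimes I_n)$. Adding the two estimates and applying Proposition~\ref{prop:weylanditsappl}(ii) to the second eigenvalue yields $\lambda_2(\overline{C}C^T\otimes A^*A+\overline{A}A^T\otimes C^*C)\le\sigma_1^2(C)\,\lambda_2(I_m\otimes A^*A+\overline{A}A^T\otimes I_n)$.

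Third, I would evaluate $\lambda_2(I_m\otimes A^*A+\overline{A}A^T\otimes I_n)$ exactly. The two summands commute, since $(I_m\otimes A^*A)(\overline{A}A^T\otimes I_n)=\overline{A}A^T\otimes A^*A=(\overline{A}A^T\otimes I_n)(I_m\otimes A^*A)$, so they share an eigenbasis of the form $w_i\otimes v_j$, where $w_i$ and $v_j$ run over orthonormal eigenvectors of $\overline{A}A^T$ and $A^*A$ respectively. On such a vector the sum acts as multiplication by $\sigma_i^2(A)+\sigma_j^2(A)$, so its spectrum is precisely the multiset of pairwise sums $\{\sigma_i^2(A)+\sigma_j^2(A)\}$. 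The largest entry is $2\sigma_1^2(A)$ (at $i=j=1$) and, since $m,n\ge2$, the next one is $\sigma_1^2(A)+\sigma_2^2(A)$ (attained at $(i,j)=(1,2)$). Hence $\lambda_2(I_m\otimes A^*A+\overline{A}A^T\otimes I_n)=\sigma_1^2(A)+\sigma_2^2(A)$, and combining with the previous estimate gives the asserted bound.

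I expect the delicate point to be the second step rather than any single computation: one must make sure the two positive-semidefinite replacements are legitimate in the L\"owner order and, above all, that Proposition~\ref{prop:weylanditsappl}(ii) is used to transfer the estimate to $\lambda_2$ and not merely to $\lambda_1$. In the third step the only thing to watch is that the second largest pairwise sum is $\sigma_1^2(A)+\sigma_2^2(A)$ rather than the strictly larger $2\sigma_1^2(A)$; this is exactly where the simultaneous diagonalizability (commutation) of the two summands, together with the hypothesis $m,n\ge2$, is needed.
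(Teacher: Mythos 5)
Your proof is correct, but it takes a genuinely different route from the paper's. The paper makes only a \emph{one-sided} replacement: it bounds $\overline{C}C^{T}\otimes A^{*}A$ by $\sigma_{1}^{2}(C)I_{m}\otimes A^{*}A$ but keeps $\overline{A}A^{T}\otimes C^{*}C$ intact, then conjugates by $U_{1}\otimes I_{n}$ (with $U_{1}$ diagonalizing $\overline{A}A^{T}$) to reach a block-diagonal matrix $\Diag\{X_{1},\cdots,X_{m}\}$ with $X_{i}=\sigma_{1}^{2}(C)A^{*}A+\sigma_{i}^{2}(A)C^{*}C$; the ordering $X_{1}\ge X_{2}\ge\cdots$ identifies the second eigenvalue as $\max\{\lambda_{1}(X_{2}),\lambda_{2}(X_{1})\}$, and each candidate is then estimated by Weyl's additive inequality, Proposition \ref{prop:weylanditsappl} (i). You instead remove $C$ from the problem entirely via the \emph{two-sided} replacement $\overline{C}C^{T}\le \sigma_{1}^{2}(C)I_{m}$ and $C^{*}C\le\sigma_{1}^{2}(C)I_{n}$, and then compute $\lambda_{2}$ of the Kronecker sum $I_{m}\otimes A^{*}A+\overline{A}A^{T}\otimes I_{n}$ exactly as the second largest pairwise sum $\sigma_{1}^{2}(A)+\sigma_{2}^{2}(A)$, using $m,n\ge 2$ (which is indeed in force in this part of the paper). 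Your version is arguably cleaner: it needs only the monotonicity statement, Proposition \ref{prop:weylanditsappl} (ii), never the additive inequality (i), and its final step is an exact spectral identity rather than an estimate; the paper's one-sided substitution retains a tighter intermediate matrix bound, but that extra precision is discarded at the last step, so both arguments land on the same constant $\sigma_{1}^{2}(C)(\sigma_{1}^{2}(A)+\sigma_{2}^{2}(A))$. One small presentational point: commutation of the two summands by itself guarantees only \emph{some} common orthonormal eigenbasis, not one of product form; what you actually need (and what is immediate) is the direct verification that each $w_{i}\otimes v_{j}$ is an eigenvector of both $I_{m}\otimes A^{*}A$ and $\overline{A}A^{T}\otimes I_{n}$, so the commutation computation can be dropped altogether.
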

\begin{proof}
  By virtue of the inequality $\sigma_{1}^{2}(C)I_{m}\ge \overline{C}C^{T}$ and Lemma \ref{lem:propkron}, it follows that $\sigma_{1}^{2}(C)I_{m}\otimes A^{*}A \ge \overline{C}C^{T}\otimes A^{*}A$, which implies that 
\begin{align*}
\sigma_{1}^{2}(C)I_{m}\otimes A^{*}A+\overline{A}A^{T}\otimes C^{*}C \ge \overline{C}C^{T}\otimes A^{*}A+\overline{A}A^{T}\otimes C^{*}C.
\end{align*}
Let $l$ be the rank of $A$ and define $D_{A}=\Diag\{\sigma_{1}^{2}(A),\cdots ,\sigma_{l}^{2}(A)\}.$ Suppose that the singular value decomposition of $\overline{A}A^{T}$ is given by $\overline{A}A^{T}=U_{1}\Diag\{D_{A},O_{m-l}\}U_{1}^{*}$. Then, by the similarity invariance of the characteristic polynomial and the fact that  $U_{1}\otimes I_{n}$ is invertible, we have
\begin{align*}
\Lambda(\sigma_{1}^{2}(C)I_{m}\otimes A^{*}A+\overline{A}A^{T}\otimes C^{*}C)=\Lambda(\sigma_{1}^{2}(C)I_{m}\otimes A^{*}A+\Diag\{D_{A},O_{m-l}\}\otimes C^{*}C).
\end{align*}
By defining $X_{i}=\sigma_{1}^{2}(C)A^{*}A+\sigma_{i}^{2}(A)C^{*}C$ for $1\le i\le l$ and $X_{i}=\sigma_{1}^{2}(C)A^{*}A$ for $l+1\le i\le m$, we can rewrite $\sigma_{1}^{2}(C)I_{m}\otimes A^{*}A+\Diag\{D_{A},O_{m-l}\}\otimes C^{*}C$ as $\Diag\{X_{1},\cdots ,X_{m}\}.$ Here, because $\sigma_{1}^{2}(A)\ge\cdots \ge \sigma_{l}^{2}(A)>0 $ and both $\sigma_{1}^{2}(C)A^{*}A$ and $C^{*}C$ are positive semidefinite, we obtain $X_{i} \ge X_{i+1}$ for every $i\in\{1,\cdots ,m-1\}$. From this and Proposition \ref{prop:weylanditsappl} \textup{($\mathrm{ii}$)}, we have $\lambda_{j}(X_{i})\ge \lambda_{j}(X_{i+1})$ for all $i,j$ with $1\le i\le m-1$ and $1\le j\le n$, which implies $\lambda_{2}(\Diag\{X_{1},\cdots ,X_{m}\})=\max\{\lambda_{1}(X_{2}),\lambda_{2}(X_{1})\}.$ In the case that $\max\{\lambda_{1}(X_{2}),\lambda_{2}(X_{1})\} = \lambda_{1}(X_{2})$, it follows from Proposition \ref{prop:weylanditsappl} \textup{($\mathrm{i}$)} that 
\begin{align*}
  \lambda_{1}(X_{2})&\le \sigma_{1}^{2}(C)\lambda_{1}(A^{*}A)+\sigma_{2}^{2}(A)\lambda_{1}(C^{*}C)\\
  &= \sigma_{1}^{2}(C)(\sigma_{1}^{2}(A)+\sigma_{2}^{2}(A)).
\end{align*}
Similarly, if $\max\{\lambda_{1}(X_{2}),\lambda_{2}(X_{1})\} = \lambda_{2}(X_{1})$, then by Proposition \ref{prop:weylanditsappl} \textup{($\mathrm{i}$)}, we have
\begin{align*}
  \lambda_{2}(X_{1})&\le \sigma_{1}^{2}(C)\lambda_{2}(A^{*}A)+\sigma_{1}^{2}(A)\lambda_{1}(C^{*}C)\\
  &= \sigma_{1}^{2}(C)(\sigma_{1}^{2}(A)+\sigma_{2}^{2}(A)).
\end{align*}
Therefore, $\max\{\lambda_{1}(X_{2}),\lambda_{2}(X_{1})\}\le \sigma_{1}^{2}(C)(\sigma_{1}^{2}(A)+\sigma_{2}^{2}(A))$, and we thus get $\lambda_{2}(\overline{C}C^{T}\otimes A^{*}A+\overline{A}A^{T}\otimes C^{*}C)\le \sigma_{1}^{2}(C)(\sigma_{1}^{2}(A)+\sigma_{2}^{2}(A)).$
\end{proof}
We conclude from the above Theorem and (\ref{neq:tipofGBW}) that 
\begin{align*}
  \lambda_{1}(K_{A,C}^{*}K_{A,C})\le 2\sigma_{1}^{2}(C)(\sigma_{1}^{2}(A)+\sigma_{2}^{2}(A)),
\end{align*}
and finally that 
\begin{align*}
  \|ABC-CBA\|_{F}^{2}\le 2\|C\|^{2}_{2}\|A\|^{2}_{(2),2}\|B\|_{F}^{2}.
\end{align*}
\begin{rem}\label{rem:oneofgen}
The following inequality can be regarded as a generalization of (\ref{ineq:sharpenbwineq2}):
\begin{align}\label{ineq:oneofgen1}
\|ABC-CBA\|_{F}^{2}\le \|C\|^{2}_{2}\|A\otimes B-B\otimes A\|^{2}_{F}. 
\end{align}
However, this does not hold in the general case. Because under the condition that $m=n=2$, $A=B=\begin{bmatrix} -1&-1\\0&0\end{bmatrix}$, and $C=\begin{bmatrix} 2&0\\0&1\end{bmatrix}$, $\|ABC-CBA\|^{2}_{F}$ becomes 1 and $\|C\|^{2}_{2}\|A\otimes B-B\otimes A\|^{2}_{F}$ becomes 0, from which it follows that $\|ABC-CBA\|_{F}^{2}> \|C\|^{2}_{2}\|A\otimes B-B\otimes A\|^{2}_{F}$.
\end{rem}

\section{Additional investigations}
Inequality (\ref{ineq:gbwineq1}) in Theorem \ref{thm:gbwineq1} is a generalization of inequality (\ref{ineq:sharpenbwineq1}) in the case where the norm of either $A$ or $C$ is the spectral norm. A generalization of the inequalities (\ref{ineq:bwineq}), (\ref{ineq:sharpenbwineq1}), and (\ref{ineq:sharpenbwineq2}) to the case where the norm of $B$ is the spectral norm is also conceivable. Based on the numerical experiments, we conjecture that the following result holds.
\begin{con}
  For all $A,C\in M_{m,n}(\BC)$ and $B \in M_{n,m}(\BC)$ with $m,n\ge2,$
  \begin{align}\label{ineq:sharpenconjbwineq1}
  \|ABC-CBA\|^{2}_{F}\le 2\|B\|^{2}_{2}\|A\|^{2}_{(2),2}\|C\|_{F}^{2}.
  \end{align}
\end{con}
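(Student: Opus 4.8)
The plan is to mirror Section 2 but to vectorize $C$ instead of $B$, since in (\ref{ineq:sharpenconjbwineq1}) it is $C$ that carries the Frobenius norm. Writing $P=AB\in M_{m}(\BC)$ and $Q=BA\in M_{n}(\BC)$, the generalized commutator becomes the Sylvester-type expression $\mathcal{L}(C):=ABC-CBA=PC-CQ$, linear in $C$. By Lemma \ref{lem:popsofvec} \textup{(iii)} we have $\VEC(PC-CQ)=L\,\VEC(C)$ with $L:=I_{n}\otimes P-Q^{T}\otimes I_{m}$, so that, exactly as in the proof of Theorem \ref{thm:gbwineq1}, $\|ABC-CBA\|_{F}^{2}\le \lambda_{1}(L^{*}L)\|C\|_{F}^{2}$. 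Thus the claim reduces to proving the operator-norm estimate $\lambda_{1}(L^{*}L)\le 2\sigma_{1}^{2}(B)(\sigma_{1}^{2}(A)+\sigma_{2}^{2}(A))$.

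First I would establish the analogue of Lemma \ref{lem:psdofKAC}. Completing the square precisely as there, the matrix $(I_{n}\otimes P+Q^{T}\otimes I_{m})^{*}(I_{n}\otimes P+Q^{T}\otimes I_{m})$ is positive semidefinite, and rearranging this gives
\begin{align}\notag
L^{*}L\le 2\bigl(I_{n}\otimes B^{*}A^{*}AB+\overline{BAA^{*}B^{*}}\otimes I_{m}\bigr)=:2M .
\end{align}
Next I would bound $\lambda_{2}(M)$, imitating the Theorem preceding Remark \ref{rem:oneofgen}. Diagonalizing the Hermitian positive semidefinite factor $\overline{BAA^{*}B^{*}}$ and conjugating $M$ by a unitary of the form $V\otimes I_{m}$ turns $M$ into the block-diagonal matrix $\Diag\{Y_{1},\dots ,Y_{n}\}$ with $Y_{j}=B^{*}A^{*}AB+\sigma_{j}^{2}(BA)I_{m}$. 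Since $\sigma_{1}(BA)\ge\cdots\ge\sigma_{n}(BA)$ forces $Y_{1}\ge\cdots\ge Y_{n}$, Proposition \ref{prop:weylanditsappl} \textup{(ii)} gives $\lambda_{2}(M)=\max\{\lambda_{1}(Y_{2}),\lambda_{2}(Y_{1})\}$. The standard singular-value submultiplicativity $\sigma_{i+j-1}(XY)\le\sigma_{i}(X)\sigma_{j}(Y)$ then yields $\sigma_{1}(AB),\sigma_{1}(BA)\le\sigma_{1}(A)\sigma_{1}(B)$, $\sigma_{2}(AB)\le\sigma_{2}(A)\sigma_{1}(B)$ and $\sigma_{2}(BA)\le\sigma_{1}(B)\sigma_{2}(A)$, whence both $\lambda_{1}(Y_{2})=\sigma_{1}^{2}(AB)+\sigma_{2}^{2}(BA)$ and $\lambda_{2}(Y_{1})=\sigma_{2}^{2}(AB)+\sigma_{1}^{2}(BA)$ are at most $\sigma_{1}^{2}(B)(\sigma_{1}^{2}(A)+\sigma_{2}^{2}(A))=\|B\|_{2}^{2}\|A\|_{(2),2}^{2}$. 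Thus $\lambda_{2}(M)\le\|B\|_{2}^{2}\|A\|_{(2),2}^{2}$, and combined with the Weyl monotonicity $\lambda_{2}(L^{*}L)\le 2\lambda_{2}(M)$ this reduces everything to the single identity $\lambda_{1}(L^{*}L)=\lambda_{2}(L^{*}L)$.

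The hard part will be establishing exactly this degeneracy, i.e. the analogue of Proposition \ref{prop:propsofKAC} asserting that the largest eigenvalue of $L^{*}L$ has geometric multiplicity at least $2$. The obstacle is that the partner-eigenvector construction of Proposition \ref{prop:propsofKAC} transfers only halfway. Setting $\tau(C)=B^{*}C^{*}B^{*}$, one does obtain the antilinear intertwining $\tau\mathcal{L}=-\mathcal{L}^{*}\tau$, which rests on the identity $B^{*}A^{*}B^{*}=P^{*}B^{*}=B^{*}Q^{*}$ valid because $P=AB$ and $Q=BA$. However, the reverse relation $\tau\mathcal{L}^{*}=-\mathcal{L}\tau$, which in Proposition \ref{prop:propsofKAC} is free because the conjugate transpose is an involutive isometry, now fails: it would demand $ABB^{*}=B^{*}BA$, which is false in general. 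Moreover $\tau$ is not invertible when $B$ is rectangular or rank-deficient, so the symplectic-type symmetry that forces even multiplicities in Proposition \ref{prop:propsofKAC} is genuinely absent, in part because $P$ and $Q$ need not even have the same size, let alone be unitarily similar.

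To close this gap I would attempt to repair $\tau$ into a true antilinear isometry by inserting the singular value decomposition of $B$ to compensate for its non-unitarity; since an isometric intertwiner of $P$ and $Q$ can exist only when they are unitarily similar, this seems to require the extra hypothesis $m=n$, so as a fallback I would try a density/perturbation argument reducing to the generic case where $A$ and $B$ have full rank (hence $P$ and $Q$ are similar). Failing these, one still obtains unconditionally the weaker estimate $\|ABC-CBA\|_{F}^{2}\le 4\|B\|_{2}^{2}\|A\|_{(2),2}^{2}\|C\|_{F}^{2}$ from $\lambda_{1}(L^{*}L)\le 2\lambda_{1}(M)\le 4\sigma_{1}^{2}(B)\sigma_{1}^{2}(A)$; the missing factor of $2$ is precisely the gap that the multiplicity statement of Proposition \ref{prop:propsofKAC} closes in the setting of Theorem \ref{thm:gbwineq1}, which is why I expect this to be the decisive and hardest step.
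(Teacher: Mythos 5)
First, be aware that the statement you were asked to prove is not a theorem of the paper: the author states it explicitly as a \emph{conjecture}, supported only by numerical experiments and by the degenerate cases $m=1$ or $n=1$ (Theorem \ref{thm:bwineqm1}). So there is no proof in the paper to compare yours against, and your attempt must be judged solely on whether it closes the conjecture. It does not, as you yourself acknowledge. The scaffolding you build is sound and is the natural transplant of Section 2: writing $P=AB$, $Q=BA$, $L=I_{n}\otimes P-Q^{T}\otimes I_{m}$, you correctly obtain $\|ABC-CBA\|_{F}^{2}\le\lambda_{1}(L^{*}L)\|C\|_{F}^{2}$; the semidefinite bound $L^{*}L\le 2M$ with $M=I_{n}\otimes B^{*}A^{*}AB+\overline{BAA^{*}B^{*}}\otimes I_{m}$ is the correct analogue of Lemma \ref{lem:psdofKAC}; and the estimate $\lambda_{2}(M)\le\|B\|_{2}^{2}\|A\|_{(2),2}^{2}$ via block diagonalization and $\sigma_{i+j-1}(XY)\le\sigma_{i}(X)\sigma_{j}(Y)$ checks out.

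The genuine gap is exactly the step you flag as ``the hard part'': the analogue of Proposition \ref{prop:propsofKAC}, i.e. $\lambda_{1}(L^{*}L)=\lambda_{2}(L^{*}L)$. Everything hinges on it, and nothing in your proposal establishes it. Your diagnosis of the obstruction is accurate: the paper's partner construction works because the antiunitary involution $Y\mapsto Y^{*}$ intertwines \emph{both} $\mathcal{K}:Y\mapsto AYC-CYA$ with $-\mathcal{K}^{*}$ \emph{and} $\mathcal{K}^{*}$ with $-\mathcal{K}$ (both relations are used: one for orthogonality of the partner, the other to show the partner is again an eigenvector), whereas your $\tau(C)=B^{*}C^{*}B^{*}$ satisfies only the first relation, is not an isometry, and need not be injective. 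The proposed repairs are not proofs: inserting the SVD of $B$ cannot make $\tau$ isometric unless $AB$ and $BA$ are unitarily similar, and the density/full-rank reduction fails for the same reason --- generic full rank yields similarity of $AB$ and $BA$, not unitary similarity, so there is no statement valid on a dense set to pass to the limit. Note also that your unconditional fallback $\|ABC-CBA\|_{F}^{2}\le4\|B\|_{2}^{2}\|A\|_{2}^{2}\|C\|_{F}^{2}$ is exactly what the triangle inequality $\|ABC-CBA\|_{F}\le\|ABC\|_{F}+\|CBA\|_{F}\le 2\|A\|_{2}\|B\|_{2}\|C\|_{F}$ already gives, so it represents no progress over the trivial bound. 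In short: your reduction is correct and pinpoints precisely the degeneracy statement whose absence presumably led the author to leave this as a conjecture (small numerical examples suggest that degeneracy may in fact hold, so the route is not hopeless), but as written the conjecture remains unproven.
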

Note that (\ref{ineq:sharpenconjbwineq1}) takes the same form as (\ref{ineq:sharpenbwineq1}) when $m=n$ and $B=I_{n}$. When $n=1$ or $m=1$, the inequality (\ref{ineq:sharpenconjbwineq1}) holds by Theorem \ref{thm:bwineqm1}. Under the assumption that (\ref{ineq:sharpenconjbwineq1}) is satisfied, the following inequality holds as well:
\begin{align}\label{ineq:oneofgen2}
\|ABC-CBA\|_{F}^{2}&\le \|B\|^{2}_{2}\|A\otimes C-C\otimes A\|^{2}_{F}.
\end{align}
The validity of statement (\ref{ineq:oneofgen2}) follows from (\ref{ineq:sharpenconjbwineq1}). Indeed, by (\ref{ineq:sharpenconjbwineq1}), we have $\|ABC-CBA\|^{2}_{F}\le2\|B\|^{2}_{2}\|A\|^{2}_{(2),2}\|C\|^{2}_{F}\le2\|B\|^{2}_{2}\|A\|^{2}_{F}\|C\|^{2}_{F}$, we let $C'=C-\frac{\overline{(A,C)}}{\|A\|_{F}^{2}}A$, which leads to
\begin{align}\notag
\|ABC-CBA\|_{F}^{2}&=\|ABC'-C'BA\|_{F}^{2}\\ \notag
           &\le 2\|B\|^{2}_{2}\|A\|^{2}_{F}\|C'\|^{2}_{F}\\ \notag
           &=2\|B\|^{2}_{2}(\|A\|^{2}_{F}\|C\|^{2}_{F}-|(A,C)|^{2})\\ \notag
           &=\|B\|^{2}_{2}\|A\otimes C-C\otimes A\|^{2}_{F}.
\end{align}
It is known that the following inequality holds for the Frobenius norm of $ABC-CBA$ when $A,B,$ and $C$ are real square matrices.
\begin{thm}[\cite{ANor}, Theorem 3.1]\label{thm:stbw}
For all $n\times n$ real matrices $A,B$ and $C$,
\begin{align}\label{ineq:laslzo}
\|ABC-CBA\|_{F}^{2}\le \dfrac{\|B\|^{2}_{F}}{2}\|A\otimes C-C\otimes A\|^{2}_{F}.
\end{align}
\end{thm}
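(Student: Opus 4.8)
The plan is to run the same vectorization argument already used for (\ref{ineq:gbwineq1}), but to extract the sharper constant $\tfrac12$ by exploiting the eigenvalue pairing of Proposition \ref{prop:propsofKAC}. Viewing the real matrices $A,B,C$ as complex ones and writing $K_{A,C}=C^{T}\otimes A-A^{T}\otimes C$, Lemma \ref{lem:popsofvec} \textup{($\mathrm{iii}$)} gives
\begin{align*}
\|ABC-CBA\|_{F}^{2}=\|K_{A,C}\VEC(B)\|_{2}^{2}\le \lambda_{1}(K_{A,C}^{*}K_{A,C})\|B\|_{F}^{2}.
\end{align*}
If $\lambda_{1}(K_{A,C}^{*}K_{A,C})=0$, then $K_{A,C}$ is the zero matrix, the left-hand side vanishes, and the inequality is trivial (this also covers $n=1$); so I may assume $\lambda_{1}(K_{A,C}^{*}K_{A,C})>0$. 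It then suffices to prove $\lambda_{1}(K_{A,C}^{*}K_{A,C})\le \tfrac12\|A\otimes C-C\otimes A\|_{F}^{2}$.

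First I would invoke Proposition \ref{prop:propsofKAC}. Since $K_{A,C}^{*}K_{A,C}$ is Hermitian positive semidefinite, its geometric and algebraic multiplicities coincide, so the hypothesis $\lambda_{1}>0$ together with the ``multiplicity at least $2$'' statement forces $\lambda_{1}(K_{A,C}^{*}K_{A,C})=\lambda_{2}(K_{A,C}^{*}K_{A,C})$. Using that all remaining eigenvalues are nonnegative, this yields
\begin{align*}
2\lambda_{1}(K_{A,C}^{*}K_{A,C})
=\lambda_{1}(K_{A,C}^{*}K_{A,C})+\lambda_{2}(K_{A,C}^{*}K_{A,C})
\le \sum_{i}\lambda_{i}(K_{A,C}^{*}K_{A,C})
=\Tr(K_{A,C}^{*}K_{A,C})
=\|K_{A,C}\|_{F}^{2}.
\end{align*}
The final step is a Frobenius-norm computation: expanding with $\|X\otimes Y\|_{F}=\|X\|_{F}\|Y\|_{F}$ and $(X_{1}\otimes Y_{1},X_{2}\otimes Y_{2})=(X_{1},X_{2})(Y_{1},Y_{2})$, I would obtain $\|K_{A,C}\|_{F}^{2}=2\|A\|_{F}^{2}\|C\|_{F}^{2}-2\,\mathrm{Re}\,[(C^{T},A^{T})(A,C)]$. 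Because $A,C$ are real, $(C^{T},A^{T})=\Tr(C^{T}A)=\Tr(AC^{T})=(A,C)\in\BR$, so $\|K_{A,C}\|_{F}^{2}=2(\|A\|_{F}^{2}\|C\|_{F}^{2}-(A,C)^{2})=\|A\otimes C-C\otimes A\|_{F}^{2}$. Combining the three steps gives $\lambda_{1}(K_{A,C}^{*}K_{A,C})\le \tfrac12\|A\otimes C-C\otimes A\|_{F}^{2}$, and substituting into the first display yields the claim.

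The substantive content, namely the eigenvalue pairing that makes $2\lambda_{1}\le \Tr(K_{A,C}^{*}K_{A,C})$ legitimate, is already in hand through Proposition \ref{prop:propsofKAC}; given that, the remaining work is bookkeeping. The one point I expect to require care is the identity $\|K_{A,C}\|_{F}^{2}=\|A\otimes C-C\otimes A\|_{F}^{2}$, which is precisely where realness is essential: for complex $A,C$ the relevant cross terms differ (one produces $\mathrm{Re}[(C^{T},A^{T})(A,C)]$, the other $|(A,C)|^{2}$), so the trace bound would then only give $\tfrac12\|K_{A,C}\|_{F}^{2}$ rather than $\tfrac12\|A\otimes C-C\otimes A\|_{F}^{2}$. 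Thus the theorem as stated is genuinely tied to the real case, and the proof above isolates exactly that dependence.
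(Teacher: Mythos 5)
Your proof is correct, and it is essentially the paper's own argument: the paper obtains Theorem \ref{thm:stbw} as a special case of Proposition \ref{prop:gstbw}, using exactly your chain of steps --- vectorization, the multiplicity result of Proposition \ref{prop:propsofKAC} to get $2\sigma_{1}^{2}(K_{A,C})=\sigma_{1}^{2}(K_{A,C})+\sigma_{2}^{2}(K_{A,C})\le \sum_{i}\sigma_{i}^{2}(K_{A,C})=\|K_{A,C}\|_{F}^{2}$, and the identity $\|K_{A,C}\|_{F}^{2}=\|A\otimes C-C\otimes A\|_{F}^{2}$.

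However, your closing claim --- that this last identity is ``genuinely tied to the real case'' --- is false, and the paper's Proposition \ref{prop:gstbw} directly contradicts it: the same inequality is proved there for arbitrary complex rectangular $A,B,C$. The point you missed is that $(C^{T},A^{T})=\Tr\bigl(C^{T}\overline{A}\bigr)=\overline{(A,C)}$ for any complex $A,C$, so the cross term in your expansion is $\mathrm{Re}\bigl[(C^{T},A^{T})(A,C)\bigr]=\mathrm{Re}\bigl[\overline{(A,C)}(A,C)\bigr]=|(A,C)|^{2}$, automatically real and equal to the cross term $|(A,C)|^{2}$ appearing in $\|A\otimes C-C\otimes A\|_{F}^{2}$. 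Hence $\|K_{A,C}\|_{F}^{2}=2\bigl(\|A\|_{F}^{2}\|C\|_{F}^{2}-|(A,C)|^{2}\bigr)=\|A\otimes C-C\otimes A\|_{F}^{2}$ holds with no realness assumption, and your argument, unchanged, proves the stronger complex rectangular statement --- which is precisely what the paper does, and is why it describes Proposition \ref{prop:gstbw} as both a generalization of and an alternative proof for Theorem \ref{thm:stbw}. (The realness/skew-symmetry issue you were worried about is relevant only to the original proof in the cited reference, which uses a real skew-symmetric normal matrix $S$; the vectorization-plus-multiplicity route avoids it entirely.)
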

The proof of Theorem \ref{thm:stbw} in \cite{ANor} relies on the existence of an $n^{2}\times n^{2}$ real skew-symmetric matrix $S$ satisfying $\|ABC-CBA\|_{F}=\|S\VEC(B)\|_{2}$, and that $S$ is a normal matrix whose eigenvalues are purely imaginary with conjugate pairs. The matrix $S$ can be constructed by permuting the rows of $K_{A,C}$. That is, there exists a permutation matrix $U$ such that $UK_{A,C}=S$. When $S$ is a complex skew-symmetric matrix, it is not necessarily normal. In other words, there is no guaranteed relationship between the Frobenius norm of $S$ and its eigenvalues. The following proposition extends Theorem \ref{thm:stbw} to the case of rectangular and complex matrices, and it also provides an alternative proof of Theorem \ref{thm:stbw}.
\begin{prop}\label{prop:gstbw}
For all $m\times n$ complex matrices $A,C,$ and $n\times m$ complex matrix $B$, 
\begin{align}\label{ineq:gstbw}
  \|ABC-CBA\|_{F}^{2}\le \frac{\|B\|^{2}_{F}}{2}\|A\otimes C-C\otimes A\|_{F}^{2}.
\end{align}
\end{prop}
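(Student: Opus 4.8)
The plan is to run the same spectral reduction already used for inequality \eqref{ineq:gbwineq1} and then to harvest the extra factor $\tfrac12$ from the double-multiplicity phenomenon of Proposition \ref{prop:propsofKAC}. Recall that $\|ABC-CBA\|_F^2=\|K_{A,C}\VEC(B)\|_2^2\le\lambda_1(K_{A,C}^*K_{A,C})\|B\|_F^2$, with $K_{A,C}=C^T\otimes A-A^T\otimes C$. Hence it is enough to establish the single scalar bound
\[
\lambda_1(K_{A,C}^*K_{A,C})\le\tfrac12\|A\otimes C-C\otimes A\|_F^2 .
\]

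First I would isolate the trivial case: if $\lambda_1(K_{A,C}^*K_{A,C})=0$ then $K_{A,C}=O_{mn}$, so $ABC-CBA=O_{m\times n}$ and there is nothing to prove. Assume therefore $\lambda_1>0$. Because $K_{A,C}^*K_{A,C}$ is positive semidefinite its eigenvalues are nonnegative and its algebraic and geometric multiplicities agree, so Proposition \ref{prop:propsofKAC} forces the top eigenvalue to have multiplicity at least two, i.e. $\lambda_1(K_{A,C}^*K_{A,C})=\lambda_2(K_{A,C}^*K_{A,C})$. I would remark here that the proof of Proposition \ref{prop:propsofKAC} is phrased purely through the generalized commutator and $\VEC$, so it never uses $m,n\ge2$; the identity $\lambda_1=\lambda_2$ is thus valid for all $m,n$, and in particular the present argument simultaneously covers Proposition \ref{prop:tightehrubofgm1}. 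Discarding the nonnegative remaining eigenvalues then gives
\[
2\lambda_1(K_{A,C}^*K_{A,C})=\lambda_1+\lambda_2\le\sum_{i=1}^{mn}\lambda_i(K_{A,C}^*K_{A,C})=\Tr(K_{A,C}^*K_{A,C})=\|K_{A,C}\|_F^2 .
\]

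The last task is a Frobenius-norm identity. Using $\|X\otimes Y\|_F=\|X\|_F\|Y\|_F$ together with $\Tr(C^T\overline{C})=\|C\|_F^2$ and $\Tr(C^T\overline{A})=\overline{(A,C)}$ (and their symmetric counterparts obtained by swapping $A$ and $C$), I would expand both Kronecker differences and find that each equals $2(\|A\|_F^2\|C\|_F^2-|(A,C)|^2)$; in particular $\|K_{A,C}\|_F^2=\|A\otimes C-C\otimes A\|_F^2$. Substituting this into the previous display yields the boxed scalar bound, and multiplying through by $\|B\|_F^2$ produces \eqref{ineq:gstbw}.

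The only real content sits in the step $2\lambda_1(K_{A,C}^*K_{A,C})\le\Tr(K_{A,C}^*K_{A,C})$: this is precisely where the factor $\tfrac12$ (the improvement over the crude estimate $\lambda_1\le\Tr$) is manufactured, and it rests entirely on the guaranteed double multiplicity supplied by Proposition \ref{prop:propsofKAC}. Everything else is routine: the two norm expansions are elementary Kronecker bookkeeping, and their coincidence merely reflects that both Kronecker differences are assembled from the same products of entries of $A$ and $C$ up to relabeling of indices. I do not anticipate any genuine obstacle, only the care needed to keep the conjugate-transpose and transpose factors straight in the trace computations.
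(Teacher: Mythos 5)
Your proposal is correct and follows essentially the same route as the paper's own proof: both reduce to the bound $\|ABC-CBA\|_F^2\le\sigma_1^2(K_{A,C})\|B\|_F^2$, invoke Proposition \ref{prop:propsofKAC} to get the double multiplicity (hence $2\sigma_1^2(K_{A,C})\le\|K_{A,C}\|_F^2$), and use the identity $\|K_{A,C}\|_F^2=2(\|A\|_F^2\|C\|_F^2-|(A,C)|^2)=\|A\otimes C-C\otimes A\|_F^2$. Your observation that the multiplicity argument never uses $m,n\ge2$, so the proof also covers Proposition \ref{prop:tightehrubofgm1}, is exactly how the paper handles that case as well.
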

\begin{proof}
It suffices to consider the case where all three matrices $A,B$, and $C$ are nonzero. We may assume $\|B\|_{F}=1$ without loss of generality. Then, since $\|ABC-CBA\|_{F}^{2}\le \sigma^{2}_{1}(K_{A,C})$ and $\|K_{A,C}\|_{F}^{2}=2(\|A\|_{F}^{2}\|C\|_{F}^{2}-|(A,C)|^{2})=\|A\otimes C-C\otimes A\|_{F}^{2}$, it is enough to show that $2\sigma_{1}^{2}(K_{A,C})\le \|K_{A,C}\|_{F}^{2}$. Proposition \ref{prop:propsofKAC} implies that 
\begin{align*}
2\sigma_{1}^{2}(K_{A,C})&=\sigma_{1}^{2}(K_{A,C})+\sigma_{2}^{2}(K_{A,C}) \\
                        &\le \Sigma_{i=1}^{mn}\sigma_{i}^{2}(K_{A,C})\\
                        &= \|K_{A,C}\|_{F}^{2},
\end{align*}
and thus the statement holds.
\end{proof}
According to the above Proposition, inequality (\ref{ineq:oneofgen2}) holds whenever $m$ and $n$ are any natural numbers and the rank of $B$ is no greater than 2, since in this case we have $\|B\|^{2}_{F}\le 2\|B\|_{2}^{2}.$
Based on (\ref{ineq:gstbw}), one might conjecture more generally that for any $m\times n$ matrices  $A,C$ and $n\times m$ matrix $B$ with rank $k$,
\begin{align}\label{ineq:wronggen}
\|ABC-CBA\|_{F}^{2}\le \dfrac{\|B\|^{2}_{F}}{k}\|A\otimes C-C\otimes A\|^{2}_{F}
\end{align}
holds. Indeed, when $m=n$ and $B=I_{n}$, $\frac{\|B\|^{2}_{F}}{n}=1$ holds, making both sides of the inequality coincide with those of (\ref{ineq:bwineq}). However, this inequality (\ref{ineq:wronggen}) does not hold in general. For example, when $n=m=3$, $A=\begin{bmatrix} 0&-1&0\\0&0&0\\0&0&0\end{bmatrix}, B=\begin{bmatrix} 1&0&0\\0&0.5&0\\0&0&0.5\end{bmatrix}$, and $C=\begin{bmatrix} -1&1&0\\1&1&0\\-1&0&0\end{bmatrix}$, we have $\|ABC-CBA\|_{F}^{2}=4.5$ and $\dfrac{\|B\|^{2}_{F}}{k}\|A\otimes C-C\otimes A\|^{2}_{F}=4$, thus $\|ABC-CBA\|_{F}^{2}> \dfrac{\|B\|^{2}_{F}}{k}\|A\otimes C-C\otimes A\|^{2}_{F}.$

\section*{Acknowledgment} 
The author used a large language model (Chat GPT, Open AI) to assist with English editing. All theoretical content was developed independently by the author. The author received no external funding for this work.
\bibliographystyle{elsarticle-num}
\bibliography{reference}

\begin{thebibliography}{1}
\expandafter\ifx\csname url\endcsname\relax
  \def\url#1{\texttt{#1}}\fi
\expandafter\ifx\csname urlprefix\endcsname\relax\def\urlprefix{URL }\fi
\expandafter\ifx\csname href\endcsname\relax
  \def\href#1#2{#2} \def\path#1{#1}\fi

\bibitem{TheFro}
A.~B{\"o}ttcher, D.~Wenzel, The {F}robenius norm and the commutator, Linear Algebra and its Applications 429~(8-9) (2008) 1864--1885.

\bibitem{OnSome}
J.~Ge, F.~Li, Z.~Lu, Y.~Zhou, On some conjectures by {L}u and {W}enzel, Linear Algebra and its Applications 592 (2020) 134--164.

\bibitem{VarBou}
K.~M. Audenaert, Variance bounds, with an application to norm bounds for commutators, Linear Algebra and its Applications 432~(5) (2010) 1126--1143.

\bibitem{ProOf}
S.-W. Vong, X.-Q. Jin, Proof of {B}{\"o}ttcher and {W}enzel’s conjecture, Oper. Matrices 2~(3) (2008) 435--442.

\bibitem{NorSca}
Z.~Lu, Normal scalar curvature conjecture and its applications, Journal of Functional Analysis 261~(5) (2011) 1284--1308.

\bibitem{GBWinnerpro}
A.~Mayumi, G.~Kimura, H.~Ohno, D.~Chru{\'s}ci{\'n}ski, B{\"o}ttcher-{W}enzel inequality for weighted {F}robenius norms and its application to quantum physics, Linear Algebra and its Applications 700 (2024) 35--49.

\bibitem{GBWqdefor}
D.~Chru{\'s}ci{\'n}ski, G.~Kimura, H.~Ohno, T.~Singal, One-parameter generalization of the {B}{\"o}ttcher-{W}enzel inequality and its application to open quantum dynamics, Linear Algebra and its Applications 656 (2023) 158--166.

\bibitem{ANor}
L.~L{\'a}szl{\'o}, A norm inequality for three matrices, The Electronic Journal of Linear Algebra 38 (2022) 221--226.

\bibitem{Matrix}
D.~S. Bernstein, Matrix mathematics: theory, facts, and formulas with application to linear systems theory, Princeton University Press, 2005.

\end{thebibliography}
\end{document}